\numberwithin{equation}{section}
\theoremstyle{definition}
\newtheorem{theorem}{Theorem}[section]
\newtheorem{corollary}[theorem]{Corollary}
\newtheorem{lemma}[theorem]{Lemma}
\newtheorem*{corollary*}{Corollary} 
\newtheorem*{definition*}{Definition}
\newtheorem*{remark*}{Remark}
\newtheorem{remark}[theorem]{Remark}
\newtheorem{proposition}[theorem]{Proposition}
\newtheorem*{proposition*}{Proposition}
\newcommand{\norm}[1]{\ensuremath{\left\| {#1} \right\|}}
\DeclareMathOperator{\dist}{dist}
\newcommand{\convset}{\ensuremath{{\overline{S}[\varphi, \Omega_0]}}}
\newcommand{\omout}{\ensuremath{{\Omega\setminus\Omega_0}}}
\newcommand{\pp}{\ensuremath{\mathbf{p}}}
\newcommand{\ee}{\ensuremath{\varepsilon}}
\title[Approximation of Convex Funcionals with general Lagrangians]
{On Approximation of Convex Functionals with a Convexity constraint 
and general Lagrangians}
\author{Young Ho Kim}
\address{
	Department of Mathematics, Texas A\&M University, College Station, TX, 77843, USA \\
	Department of Mathematics, Indiana University, Bloomington, IN 47405, USA}
\email{yhkim@tamu.edu}
\begin{document} 

\subjclass[2020]{
	35J35,
	35J40,
	35J96.
}
\keywords{
	Convex functional,
	convexity constraint,
	linearized Monge-Amp\`ere equation,
	Monge-Amp\`ere equation,
	singular Abreu equation}

\begin{abstract}
	In this note, 
	we prove that minimizers of convex functionals with a convexity constraint 
	and a general class of Lagrangians can be approximated 
	by solutions to fourth-order equations of Abreu type.
	Our result generalizes that of 
	Le (Twisted Harnack inequality and approximation of variational problems with a convexity constraint by singular Abreu equations. \emph{Adv. Math.} \textbf{434} (2023))
	where the case of quadratically growing Lagrangians was treated.
\end{abstract}
\maketitle

\section{Introduction and statement of the main result}
In this note, 
we prove that minimizers of convex functionals with a convexity constraint 
and a general class of Lagrangians can be approximated 
by solutions to fourth-order equations of Abreu type.
The problem of approximating minimizers to convex functionals
with a convexity constraint
by solutions of fourth-order Abreu type equations 
has been studied by several authors 
\cite{Kim1,CR,LeCPAM,LePRS,LZ,Twisted_Harnack}.
Previous results were proved either in two dimensions \cite{LZ}
or under a quadratic growth assumption on the Lagrangians 
\cite{Kim1,LeCPAM,LePRS,Twisted_Harnack}.
By replacing the quadratic term in the approximation scheme,
we extend these results to the case with general Lagrangians in dimensions $n\geq 2$.

\subsection{Variational problem with a convexity constraint}
	Let $\Omega$ and $\Omega_0$ be 
	bounded, smooth, convex domains in $\mathbb{R}^n$ ($n\geq 2$)
	with $\Omega_0\Subset \Omega$.
	Suppose $\varphi \in C^5(\overline{\Omega})$ is convex
	and $F=F(x,z,\pp):\mathbb{R}^n\times\mathbb{R}\times\mathbb{R}^n\rightarrow\mathbb{R}$
	is a smooth Lagrangian that is convex 
	in the variables $\pp\in\mathbb{R}^n$ and $z\in\mathbb{R}$.
	Consider the variational problem
	\begin{equation}\label{eqn:hdvar}
	\begin{aligned}
		\inf_{u\in\convset}  \int_{\Omega_0} F(x, u(x), Du(x)) \, dx
		\equiv\inf_{u\in\convset} J(u)
		\text{,}
	\end{aligned}
	\end{equation}
	over the competitors $u$ with a convexity constraint given by
	\begin{equation}\label{eqn:hdvarcon}
	\begin{aligned}
		\overline{S}[\varphi,\Omega_0]
		=\{ u:\Omega\rightarrow\mathbb{R} \text{ convex, }
		u=\varphi \text{ on } \Omega\setminus\Omega_0 \}
		\text{.}
	\end{aligned}
	\end{equation}
	An example of a variational problem of the form (\ref{eqn:hdvar})--(\ref{eqn:hdvarcon})
	is the Rochet-Chon\'e model 
	for the monopolist problem in economics \cite{RC}.
	In this model, the Lagrangian is given by 
	$F(x,z,\pp) = (|\pp|^q/q - x\cdot \pp + z)\gamma (x)$,
	for $q\in(1,\infty)$ and a nonnegative Lipschitz function $\gamma$.

	Note that
	$\overline{S}[\varphi,\Omega_0]$ contains all convex functions 
	in $\Omega_0$, 
	which have convex extensions 
	that agree with a given convex function $\varphi$ outside $\Omega_0$.
	In addition to a Dirichlet boundary condition 
	$u=\varphi$ on $\partial\Omega_0$,
	this constraint imposes, in some weak sense,
	a restriction on the gradient of $u$ at the boundary of $\Omega_0$.
	Consequently,
	it is hard to write a tractable Euler-Lagrange equation 
	for the variational problem (\ref{eqn:hdvar})--(\ref{eqn:hdvarcon}).
	Furthermore, 
	variational problems of this type are 
	difficult to handle in numerical schemes \cite{BCMO, M16}.
	Therefore, 
	one may ask whether minimizers of these problems can be approximated 
	by solutions of higher-order, well-posed equations.

	To address these difficulties,
	Carlier and Radice \cite{CR} introduced an approximation scheme 
	using solutions to Abreu equations in
	the case when the Lagrangian $F=F(x,z)$ does not depend
	on the gradient variable 
	$\pp=(p_1, \cdots, p_n)\in\mathbb{R}^n$.
	This was extended by Le \cite{LeCPAM} to the case when 
	$F$ can be split into 
	\begin{equation}\label{eqn:Fsplit}
	F(x,z,\pp)=F^0(x,z) + F^1(x,\pp) 
	\end{equation}
	with suitable conditions on $F^0$ and $F^1$.
	In these schemes,
	penalizations of the form 
	\begin{equation}\label{eqn:approx}
	\begin{aligned}
	J(v)
	+\frac{1}{2\ee}\int_{\Omega\setminus\Omega_0} (v-\varphi)^2\, dx
	-\ee \int_\Omega \log \det D^2 v\, dx
	\end{aligned}
	\end{equation}
	were introduced for small $\ee >0$.
	The idea behind the logarithmic penalization is that 
	it should act as a good barrier for the convexity constraint
	(\ref{eqn:hdvarcon}) in problems like (\ref{eqn:hdvar}).
	Looking at the critical point $u_\ee$ of (\ref{eqn:approx})
	where $F$ is given by (\ref{eqn:Fsplit}),
	we obtain an equation of the following form:
	\begin{align}\label{eqn:el}
		\left\{
		\begin{aligned}
		\ee U_\ee^{ij} D_{ij} w_\ee &= f_\ee 
			:= \left\{ \dfrac{\partial F^0}{\partial z} (x, u_\ee) 
		- \dfrac{\partial}{\partial x_i} 
		\left( \dfrac{\partial F^1}{\partial p_i} (x,Du_\ee) \right)
		\right\} \chi_{\Omega_0} +
		\dfrac{1}{\ee} (u_\ee-\varphi) \chi_{\Omega\setminus\Omega_0} 
		&&\text{ in } \Omega\text{,}\\
			w_\ee &= ({\det D^2 u_\ee})^{-1} 
				&&\text{ in }\Omega\text{.}\\
		\end{aligned}
		\right.
	\end{align}
	Here, $(U^{ij}_\ee)_{1\leq i,j\leq n}=(\det D^2u_\ee) (D^2u_\ee)^{-1}$
	is the cofactor matrix of the Hessian matrix $D^2u_\ee$
	and $\chi_E$ is the characteristic function of the set $E$.

	Note that (\ref{eqn:el}) is a system of two equations, 
	where one is a Monge-Amp\`ere equation for $u_\ee$:
	\begin{equation}\label{eqn:ma_expl} 
		\det D^2 u_\ee = w_\ee^{-1}
		\quad \text{in }\Omega\text{,}
	\end{equation}
	and the other is a linearized Monge-Amp\`ere equation for $w_\ee$:
	\begin{equation}\label{eqn:lma_expl}
		U_\ee^{ij} D_{ij} w_\ee = \ee^{-1} f_\ee 
		\quad\text{in }\Omega\text{.}
	\end{equation}
	Equation (\ref{eqn:lma_expl}) is called a linearized Monge-Amp\`ere equation 
	because $U_\ee^{ij} D_{ij}$ comes from linearizing the
	Monge-Amp\`ere operator $\det D^2 u_\ee$:
	\begin{align*} 
		\det D^2(u_\ee+tv) =  \det D^2 u_\ee + (U^{ij}_\ee D_{ij}v) t + 
		\cdots + (\det D^2 v)t^n
		\text{.}
	\end{align*}
	As $w_\ee = (\det D^2u_\ee)^{-1}$ is of second-order in $u_\ee$,
	(\ref{eqn:el}) is a fourth-order equation in $u_\ee$. 
	Because (\ref{eqn:el}) is a system of these two equations,
	it is natural to consider second boundary value problems for 
	(\ref{eqn:el}) with Dirichlet boundary conditions on $u_\ee$ and $w_\ee$,
	such as
	\begin{equation}\label{eqn:bdcon}
	  u_\ee=\varphi\text{,}\quad 
	  w_\ee=\psi\quad\text{on }\partial\Omega
	  \text{.}
	\end{equation}
	
	When $\ee^{-1} f_\ee$ in (\ref{eqn:ma_expl})--(\ref{eqn:lma_expl}) is replaced by $-1$,
	\begin{equation*} 
		U_\ee^{ij}D_{ij} [(\det D^2u_\ee)^{-1}] = -1
	\end{equation*}
	is the Abreu equation \cite{Ab}
	which appears in the problem of 
	finding K\"ahler metrics of constant scalar curvature 
	for toric manifolds \cite{D1,D2}.
	The term 
	\begin{equation*}
		\dfrac{\partial}{\partial x_i} 
		\left( \dfrac{\partial F^1}{\partial p_i} (x,Du_\ee) \right)
	\end{equation*} 
	in (\ref{eqn:el}) depends on $D^2u_\ee$,
	which is only guaranteed to be a matrix-valued measure
	under the assumption that $u_\ee$ is convex.
	Hence, (\ref{eqn:el}) is called a \emph{singular Abreu equation}
	\cite{KLWZ,LeCPAM,LePRS,LZ}.
	
	The general scheme is to first establish 
	the existence of solutions $(u_\ee)_{\ee>0}$ to 
	the second boundary value problem to
	Abreu-type equations of the form (\ref{eqn:el})
	with boundary conditions like (\ref{eqn:bdcon}),
	and then prove that 
	after passing to a subsequence $\ee_k\rightarrow 0$,
	solutions $(u_{\ee_k})_k$ converge uniformly on compact subsets of $\Omega$
	to a minimizer of the variational problem 
	(\ref{eqn:hdvar})--(\ref{eqn:hdvarcon}).
	Therefore, 
	solvability of second boundary value problem to 
	Abreu-type equations plays a critical role 
	in approximating minimizers to convex functionals with 
	a convexity constraint.
	For gradient-dependent Lagrangians,
	previous results were proved either in two dimensions \cite{LZ}
	or under a quadratic growth assumption on the Lagrangian 
	\cite{LeCPAM,LePRS,Twisted_Harnack,Kim1}.
	By replacing the quadratic term in the approximation scheme,
	we extend the results to the case with  general Lagrangians
	in dimensions $n\geq 2$;
	also see Remark \ref{rmk:end}.

	\subsection{The Main Result}
	In this note,
	we prove that the approximation scheme 
	for the variational problem (\ref{eqn:hdvar})--(\ref{eqn:hdvarcon})
	using Abreu-type equations can be 
	extended to a general class of  Lagrangians $F$ that 
	do not necessarily satisfy a quadratic growth assumption in dimensions $n\geq 2$. 
	We achieve this by modifying the approximation scheme in (\ref{eqn:approx}). 

	Instead of a quadratic growth condition on the Lagrangian $F=F(x,z,\pp)$ 
	in the $\pp$ variable, 
	we assume that $F$ satisfies the following conditions:
	\begin{enumerate} 
		\item[(F1)] $F$ is smooth, and convex in variables
			$z\in\mathbb{R}$ and $\pp\in\mathbb{R}^n$.
		\item[(F2)] 
		The derivatives of $F$ satisfy the following growth estimates 
			for $z\in\mathbb{R}$, $\pp\in\mathbb{R}^n$:
			\begin{equation}\label{Fest} 
			\begin{aligned}
				\left| \frac{\partial F}{\partial z} (x,z,\pp) \right| 
				+ \left| \frac{\partial F}{\partial p_i} (x,z,\pp) \right| 
				&\leq f_0 (|z|) g_0 (|\pp|)
				\quad\text{for all }1\leq i\leq n\text{,}\\
				0 \leq (F_{p_i p_j} (x,z,\pp))_{1\leq i,j\leq n} 
				&\leq f_1 (|z|) g_1 (|\pp|) I_n\text{,} \\
				|F_{p_i x_i} (x,z,\pp)| &\leq f_2(|z|) g_2(|\pp|)\text{,}\\
				|F_{p_i z} (x,z,\pp)| &\leq f_3(|z|) g_3(|\pp|)
				\quad\text{for all }1\leq i\leq n\text{.}
			\end{aligned}
			\end{equation}
			Here $f_k$, $g_k$ ($0\leq k\leq 3$) are smooth, convex and 
			increasing functions from $[0,\infty)$ to $[0,\infty)$,
			$I_n$ is the $n\times n$ identity matrix,
			and repeated indices are summed.
	\end{enumerate}
	The convexity assumptions on $f_k$, $g_k$ are reasonable 
	as any smooth, increasing growth function 
	$\eta:[0,\infty)\rightarrow[0,\infty)$ 
	can be replaced by 
	\begin{align*} 
		\widetilde{\eta}(x):=\int_0^{x+1} \eta(s) \, ds
	\end{align*}
	which is convex, smooth, increasing and satisfies
	$\widetilde{\eta}\geq\eta$.

	Now, we will introduce the modifications made to the approximating functional 
	(\ref{eqn:approx}). 
	The first modification comes from Le \cite{LePRS,Twisted_Harnack}.
	Let $\rho$ be a uniformly convex defining function of $\Omega$, that is,
	\begin{align*} 
		\{x\in\mathbb{R}^n \mid \rho(x)<0\} = \Omega
		\text{,}\quad
		\rho = 0\quad\text{on }\partial\Omega
		\text{,}\quad\text{and }D\rho \neq 0
		\quad\text{on }\partial\Omega\text{.}
	\end{align*}
	Now, for $\ee >0$, we set 
	\begin{align} 
		\widetilde{\varphi}_\ee(x) = 
		\varphi(x) + \ee^{\frac{1}{3n^2}} ( e^{\rho(x)} - 1)
		\text{.}
	\end{align}
	In the quadratic term 
	$$\frac{1}{2\ee}\int_{\Omega\setminus\Omega_0} (u-\varphi)^2\, dx$$ 
	from (\ref{eqn:approx}), 
	we replace $\varphi$ by $\widetilde{\varphi}_\ee$.
	This makes the new function `sufficiently' uniformly convex
	and makes it possible to handle Lagrangians $F$ that are 
	non-uniformly convex.  

	Furthermore, we replace the quadratic term 
	$$\frac{1}{2\ee}\int_{\Omega\setminus\Omega_0} (u-\widetilde{\varphi}_\ee)^2\, dx$$
	again by  
	$$\frac{1}{\ee}\int_{\Omega\setminus\Omega_0} G(u-\widetilde{\varphi}_\ee)\, dx\text{,}$$
	where $G$ is a suitable function to be defined later.
	In Le \cite{LeCPAM,LePRS,Twisted_Harnack},
	a quadratic growth assumption had to be imposed on $F$
	as the integral including the derivative $F^1_{p_i x_i}$
	had to be bounded by the quadratic term in the approximation scheme; 
	see \cite[inequality (4.11)]{LZ},
	\cite[inequalities (2.4) and (4.15)]{LeCPAM},
	and \cite[inequalities (1.9) and (3.12)]{LePRS}.
	In this note, 
	this modification makes it possible to 
	remove the quadratic growth assumption on $F$;
	also see Remark \ref{rmk:G}.

	Because $f_k$, $g_k$ are smooth, convex, increasing and nonnegative,
	if we define 
	\begin{equation}\label{eqn:hdef}
	\begin{aligned}
		H(x) =x(1 + f_0(x) g_0(x) + f_2(x) g_2 (x) + xf_3(x) g_3(x))
		\text{,}
	\end{aligned}
	\end{equation}
	then $H$ is a convex, smooth, and increasing function 
	from $[0, \infty)$ to $[0,\infty)$ with $H(x)\geq x$. 
	Now, we define the convex function $G$ by
	\begin{align}\label{eqn:Hdef}
		G(x) = \int_0^{x^2} H(t) \, dt
		\text{.}
	\end{align}

	With these modifications to (\ref{eqn:approx}),
	the approximating functional used in this note will be 
	\begin{align}\label{Jepsilon}
		J_\ee (u) = \int_{\Omega_0} F(x,u(x), Du(x)) \, dx
		+ \frac{1}{\ee} \int_\omout G(u - \widetilde{\varphi}_\ee) \, dx 
		- \ee \int_\Omega \log \det D^2 u(x) \, dx
		\text{,}
	\end{align}
	and our second boundary problem becomes
	\begin{equation}\label{eqn:eleq} 
	\begin{aligned}
	\left\{
		\begin{aligned} 
			\ee U_\ee^{ij} D_{ij} w_\ee & = f_\ee  \\
													   & := 
				\left(
					\dfrac{\partial F}{\partial z} (x, u_\ee, Du_\ee) 
				- \dfrac{\partial}{\partial x_i} \left( \dfrac{\partial F}{\partial p_i} (x,u_\ee, Du_\ee) \right)
				\right)\chi_{\Omega_0}
				+
				\dfrac{G'(u_\ee - \widetilde{\varphi}_\ee)}{\ee} 				\chi_{\Omega\setminus\Omega_0}
			&&\text{in }\Omega\text{,} \\
			w_\ee &= ({\det D^2 u_\ee})^{-1} &&\text{ in } \Omega\text{,} \\
			u_\ee &= \varphi 
			\text{,}\quad
			w_\ee = \psi &&\text{ on } \partial\Omega\text{.}
		\end{aligned}
	\right.
	\end{aligned}
	\end{equation}
	Here 
	$(U_\ee^{ij})_{1\leq i,j\leq n}$ 
	is the cofactor matrix of $D^2 u_\ee$.

	Our main result is the following theorem.
	\begin{theorem}\label{mainthm} 
		Suppose $\Omega_0$ and $\Omega$ are  smooth
		and convex domains in $\mathbb{R}^n$ ($n\geq 2$), 
		where $\Omega$ is uniformly convex and $\Omega_0\Subset\Omega$. 
		Let $\varphi\in C^5(\overline{\Omega})$, $\psi\in C^3(\overline{\Omega})$,
		$\varphi$ is convex, and $\min_{\partial\Omega}\psi > 0$.  
		Let $F=F(x,z,\pp):\mathbb{R}^n\times\mathbb{R}\times\mathbb{R}^n\rightarrow\mathbb{R}$
		satisfy (F1)--(F2).  
		If $0<\ee<\ee_0<1$,
		where $\ee_0$ is a small number 
		depending only on $n$, $\Omega$, $\Omega_0$, $\varphi$, $\psi$, $f_k$, and $g_k$,
		then the following are true.
		\begin{enumerate}[label=(\roman*)] 
			\item The second boundary value problem (\ref{eqn:eleq}) with $G$ given by 
				(\ref{eqn:hdef})--(\ref{eqn:Hdef}) has a uniformly convex 
				$W^{4,s}(\Omega)$ solution $u_\ee$ for all $s\in (n,\infty)$.
				\label{thm:solvability} 
			\item Let $(u_\ee)_{0<\ee<\ee_0}$ be $W^{4,s}(\Omega)$ ($s>n$)
				solutions to (\ref{eqn:eleq}).
				Then, after passing to a subsequence $\ee_k\rightarrow 0$, 
				the sequence $(u_{\ee_k})_k$ 
				converges uniformly on compact subsets 
				of $\Omega$ to a minimizer $u$ of (\ref{eqn:hdvar})--(\ref{eqn:hdvarcon}).
				\label{thm:approx} 
		\end{enumerate}
	\end{theorem}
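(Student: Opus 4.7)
The theorem splits into the solvability of (\ref{eqn:eleq}) for each small $\ee>0$, part \ref{thm:solvability}, and subsequential convergence to a minimizer, part \ref{thm:approx}. My plan is to treat \ref{thm:solvability} first, as it is the technical heart; \ref{thm:approx} then follows by a standard $\Gamma$-convergence style compactness argument. For \ref{thm:solvability}, I would set up a Leray--Schauder continuity argument along a family of problems parametrized by $\sigma\in[0,1]$ that scales the $F$-dependent portion of the right-hand side $f_\ee$. At $\sigma=0$ the system decouples into a classical Dirichlet Monge-Amp\`ere problem for $u_\ee$ together with a linear Dirichlet problem for $w_\ee$, which is solvable in $W^{4,s}(\Omega)$ given the smoothness and convexity of $\varphi$ and the positivity of $\psi$; at $\sigma=1$ we recover (\ref{eqn:eleq}). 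The core work is to establish a priori $W^{4,s}$ estimates on $u_\ee$ uniform in $\sigma$ (but allowed to depend on $\ee$), in the order: (a) an $L^\infty$ bound on $u_\ee$ via barriers (using $\varphi$ from below by convexity and a smooth majorant from above); (b) two-sided bounds $0<c(\ee)\leq\det D^2 u_\ee\leq C(\ee)$; (c) $C^{2,\alpha}$ regularity of $u_\ee$ via Caffarelli's theory; (d) $W^{2,s}$ regularity of $w_\ee$ via Calder\'on--Zygmund for (\ref{eqn:lma_expl}), which combined with $\det D^2 u_\ee = w_\ee^{-1}$ yields $u_\ee\in W^{4,s}$.

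The hard part will be step (b), specifically the upper bound on $w_\ee$, since $f_\ee|_{\Omega_0}$ involves derivatives of $F$ with possibly superquadratic growth in $|Du_\ee|$. This is precisely the obstruction the modified penalization $G$ in (\ref{eqn:hdef})--(\ref{eqn:Hdef}) was engineered to resolve: $H$ was chosen to dominate all the growth functions $f_k g_k$ appearing in (\ref{Fest}), so that in maximum principle and integration-by-parts arguments involving $u_\ee-\widetilde{\varphi}_\ee$, the $F$-terms on $\Omega_0$ are absorbed by the $G$-generated terms on $\omout$. The replacement of $\varphi$ by $\widetilde{\varphi}_\ee$ supplies the uniform convexity of the comparison datum needed to close these estimates, as in \cite{LePRS,Twisted_Harnack}. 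Once this critical bound is in hand, the remaining regularity steps are standard and the continuity method closes.

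For \ref{thm:approx}, I would proceed as follows. Fixing any smooth, strictly convex competitor $v\in\convset$ and comparing $J_\ee(u_\ee)\leq J_\ee(v)$, the term $\ee\int_\Omega\log\det D^2 v\,dx$ vanishes as $\ee\to 0$ while $\ee\int_\Omega\log\det D^2 u_\ee\,dx$ is controlled from above (via the upper bound on $\det D^2 u_\ee$ obtained from the same comparison together with Jensen's inequality), yielding
\begin{align*}
\int_{\Omega_0}F(x,u_\ee,Du_\ee)\,dx + \frac{1}{\ee}\int_\omout G(u_\ee-\widetilde{\varphi}_\ee)\,dx \leq C\text{,}
\end{align*}
which forces $u_\ee\to\varphi$ in $L^1(\omout)$ as $\ee\to 0$. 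Local $L^\infty$ bounds from barriers and the local Lipschitz property of convex functions give, via Arzel\`a--Ascoli, a subsequence $u_{\ee_k}$ converging uniformly on compact subsets of $\Omega$ to a convex $u$ with $u=\varphi$ on $\omout$, so $u\in\convset$. Finally, convexity of $F$ in $(z,\pp)$ together with the pointwise a.e.\ convergence $Du_{\ee_k}\to Du$ (valid for locally uniform limits of convex functions) yield by lower semicontinuity $J(u)\leq\liminf J(u_{\ee_k}|_{\Omega_0})\leq\inf_{\convset} J$, identifying $u$ as a minimizer.
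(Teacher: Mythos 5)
Your overall architecture (degree-theoretic continuity method plus a priori $W^{4,s}$ estimates for \ref{thm:solvability}, then compactness and lower semicontinuity for \ref{thm:approx}) matches the paper's, but there are two substantive errors in where the difficulty lies and in what can be assumed.

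For \ref{thm:solvability}, you put the critical role of the penalization $G$ in step (b), the determinant bounds, and treat step (a), the $L^\infty$ bound, as routine (``via barriers''). This is backwards. The $L^\infty$ bound on $u_\ee$ is exactly where the $G$-modification is indispensable: the paper's Lemma 2.2 combines the boundary-integral inequality (2.4) with the expansion of $f_\ee$ in (2.6), and the term $\int_{\Omega_0} F_{p_i x_i}(u_\ee-\widetilde{\varphi}_\ee)\,dx$ then must be dominated by the $G'$-term on $\omout$; this forces $H$ to grow at least like the $f_k g_k$, which a quadratic penalization cannot match. A barrier argument does not produce the lower bound for the fourth-order system, and the case $u_\ee\le\widetilde\varphi_\ee-1$ in $\Omega_0$ is precisely where (2.4) and Jensen's inequality applied to the convex $H$ are used. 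Conversely, once the $L^\infty$ bound (and hence the gradient bound on $\Omega_0$, (2.21)) is in place, $f_\ee$ is controlled on $\Omega_0$ by $\widetilde{C}+D_*\Delta u_\ee$ (Lemma 2.5), and the superquadratic growth of $F$ no longer appears in the determinant bounds; those are then handled by the drift transformation of Lemma 2.7, the ABP estimate, and the twisted Harnack inequality exactly as in the quadratic case.

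For \ref{thm:approx}, the comparison $J_\ee(u_\ee)\le J_\ee(v)$ that your argument rests on is unjustified. The function $u_\ee$ solves the second boundary value problem (1.13), which imposes the extra Dirichlet condition $w_\ee=\psi$ on $\partial\Omega$; this is not the Euler--Lagrange condition for minimizing $J_\ee$ over $\convset$, and $u_\ee$ need not minimize $J_\ee$. The paper therefore does not compare $J_\ee(u_\ee)$ and $J_\ee(v)$ directly; instead, Step 2 derives via integration by parts and mollification the inequality (3.9), in which $J_\ee(v)-J_\ee(u_\ee)$ is bounded below by explicit boundary integrals involving $U_\ee^{\nu\nu}\partial_\nu(u_\ee-\varphi)$ and $F_\pp\cdot\nu_0$. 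Step 3 then shows these boundary contributions vanish along the subsequence using the uniform bound on $\eta_\ee$ extracted from (2.29). Without this step, your argument for $\int_\omout G(u_\ee-\widetilde\varphi_\ee)\,dx\to 0$ and for the final minimality inequality $J(v)\ge\liminf J(u_{\ee_k})$ has no foundation.
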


	\begin{remark} 
		In Le \cite{LeCPAM,LePRS,Twisted_Harnack},
		Lagrangians $F$ that satisfy a quadratic growth condition 
		in the $\pp$ variable are considered. 
		Compared to these results,
		Theorem \ref{mainthm} covers general Lagrangians in all dimensions $n\geq 2$
		that do not necessarily have a quadratic growth in the $\pp$ variable. 
		One example of such a Lagrangian would be given by 
		$F(x,z,\pp) = e^{|\pp|^2}$.
		This improvement comes from replacing the quadratic term in (\ref{eqn:approx}); 
		see Remark \ref{rmk:G}. 
	\end{remark}

	The rest of this note is organized as follows.
	In Section \ref{sec:solvability}, 
	we prove Theorem \ref{mainthm}\ref{thm:solvability}. 
	In Section \ref{sec:approx},
	we prove Theorem \ref{mainthm}\ref{thm:approx}.

\section{A priori estimates and existence of solutions}\label{sec:solvability}
In this section,
we prove Theorem \ref{mainthm}\ref{thm:solvability} using degree theory
and the a priori $W^{4,s}(\Omega)$ estimate in Proposition \ref{prop:unifest} below.
The proof mostly follows Le \cite[Section 2]{Twisted_Harnack}.
The main difference will be in proving the uniform $L^\infty$ bound for $u_\ee$
in Lemma \ref{prop:linftyest};
see Remark \ref{rmk:G}.

\begin{proposition}\label{prop:unifest}
	Suppose $u_\ee$ is a uniformly convex $W^{4,s}(\Omega)$ 
	($n<s<\infty$) solution to
	(\ref{eqn:eleq}),
	where $F$ satisfies (F1)--(F2) and $G$ is defined by
	(\ref{eqn:hdef})--(\ref{eqn:Hdef}).
	If $0<\ee<\ee_0<1$,
	where $\ee_0$ is a small number depending only on
	$n$, $\Omega$, $\Omega_0$, $\varphi$, $\psi$, $f_k$, and $g_k$,
	then there is $C(\ee)>0$ such that
	\begin{equation}\label{eqn:aprioriest}
	\begin{aligned}
		\norm{u_\ee}_{W^{4,s}(\Omega)}\leq C(\ee)
		\text{.}
	\end{aligned}
	\end{equation}
\end{proposition}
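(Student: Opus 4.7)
The overall strategy is to follow the blueprint of Le's Twisted Harnack paper \cite{Twisted_Harnack} and prove the a priori bound through a chain of estimates of increasing regularity: first a uniform $L^\infty$ bound on $u_\ee$, then an interior Lipschitz bound by convexity, then two-sided bounds on $\det D^2 u_\ee$ (equivalently, on $w_\ee$), and finally a bootstrap using Caffarelli's $W^{2,p}$ theory for the Monge-Amp\`ere equation combined with $L^p$/Schauder estimates for the linearized Monge-Amp\`ere equation satisfied by $w_\ee$. Only the first step requires a substantive change due to the general Lagrangian $F$; the remaining steps are standard once $f_\ee$ is known to be bounded, since the matrix $U^{ij}_\ee$ is then uniformly elliptic on $\Omega$. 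Recall that $C(\ee)$ is allowed to depend on $\ee$, so $\ee^{-1}$ factors coming from the penalization and the logarithmic term are harmless at this stage.

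The main obstacle, and the only genuinely new ingredient, is the $L^\infty$ bound for $u_\ee$ (Lemma \ref{prop:linftyest}). My plan is to test the Euler-Lagrange equation against $u_\ee - \widetilde{\varphi}_\ee$ and integrate by parts so that the derivative appearing in $\partial_{x_i}(F_{p_i}(x,u_\ee,Du_\ee))$ is moved onto the test function. The resulting interior terms are controlled in absolute value, by (F2), by sums of expressions of the form $f_0(|u_\ee|)g_0(|Du_\ee|)$, $f_2(|u_\ee|)g_2(|Du_\ee|)$, and $|Du_\ee|\,f_3(|u_\ee|)g_3(|Du_\ee|)$. The function $H$ in (\ref{eqn:hdef}) is engineered precisely to dominate this sum, and the convex penalization $G(t)=\int_0^{t^2}H(s)\,ds$ has the key property that $G'(t)\,t$ dominates $H(t^2)\,t$, so the contribution $\ee^{-1}G'(u_\ee-\widetilde{\varphi}_\ee)$ appearing in $f_\ee$ on $\Omega\setminus\Omega_0$ absorbs the interior terms after a Cauchy-Schwarz/Young splitting. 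The uniformly convex correction $\widetilde{\varphi}_\ee = \varphi + \ee^{1/(3n^2)}(e^\rho - 1)$ is used here to make the scheme sufficiently uniformly convex in a quantitative sense near $\partial\Omega$, which is what enables the absorption to close for $\ee$ smaller than some $\ee_0$; this replaces the use of uniform quadratic convexity of $F$ in $\pp$ in \cite{Twisted_Harnack}. The result is an $L^\infty$ bound on $u_\ee$ independent of $\ee$ (as one would want for the subsequent passage to the limit).

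Once the $L^\infty$ bound is in place, the rest is routine. Interior Lipschitz bounds for $u_\ee$ on compact subsets of $\Omega$ follow from convexity together with the Dirichlet condition $u_\ee=\varphi$ on $\partial\Omega$. With $u_\ee$ and $Du_\ee$ controlled, the right-hand side $f_\ee$ of (\ref{eqn:eleq}) is bounded in $L^\infty$ (with an $\ee$-dependent constant), so the maximum principle for the linearized Monge-Amp\`ere equation, together with the positive boundary condition $w_\ee=\psi>0$, yields two-sided positive bounds on $w_\ee$ and hence on $\det D^2 u_\ee$. Caffarelli's $W^{2,p}$ estimate applied to (\ref{eqn:ma_expl}) upgrades $u_\ee$ to $W^{2,p}(\Omega)$ for every $p<\infty$, which in turn makes the coefficients of the linearized Monge-Amp\`ere equation for $w_\ee$ continuous; standard $L^p$/Schauder theory then gives $w_\ee\in W^{2,s}(\Omega)$, and feeding this back into (\ref{eqn:ma_expl}) lifts $u_\ee$ to $W^{4,s}(\Omega)$ for any $s\in(n,\infty)$, completing the proof.
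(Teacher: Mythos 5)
Your Step~1 (the uniform $L^\infty$ bound for $u_\ee$) is on the right track and, modulo vagueness in the absorption step, matches the paper's Lemma~\ref{prop:linftyest}: the paper applies \cite[(3.6)]{LePRS} (which is exactly the ``test against $u_\ee-\widetilde\varphi_\ee$'' inequality), expands the divergence term, bounds the pieces via (F2) and the interior gradient estimate, and then closes using the convexity of $H$ via Jensen's inequality together with \cite[Corollary 2.2]{LePRS}. The role of $G$ is precisely what you describe.

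However, there is a genuine gap in the middle of your argument. You write that ``with $u_\ee$ and $Du_\ee$ controlled, the right-hand side $f_\ee$ of (\ref{eqn:eleq}) is bounded in $L^\infty$'', and you then propose to get two-sided bounds on $w_\ee$ by the maximum principle. This is false: expanding as in (\ref{fe_nondiv}), $f_\ee$ contains the term $-F_{p_ip_j}(x,u_\ee,Du_\ee)\,D_{ij}u_\ee$, which depends on the \emph{second} derivatives of $u_\ee$ and is not controlled by the zeroth- and first-order bounds you have at that point. This is exactly the ``singular'' feature of singular Abreu equations, and it is why the paper only establishes the one-sided estimate $-\widetilde C - D_*\Delta u_\ee \le f_\ee \le \widetilde C$ in $\Omega_0$ (Lemma~\ref{prop:fe}). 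The upper bound $f_\ee\le\widetilde C$ suffices to get the upper bound on $\det D^2u_\ee$ (Lemma~\ref{lem:detubd}), but the lower bound on $\det D^2u_\ee$ cannot be obtained by a naive ABP/maximum-principle argument as you propose, because the unbounded $D_*\Delta u_\ee$ appears with the wrong sign. The paper's essential additional ingredient is the change of unknown $\eta_\ee^{x_0}= w_\ee e^{F_\ee^{x_0}(Du_\ee)}$ of Lemma~\ref{lem:lmatransform}, which converts the $D_*\Delta u_\ee$ term into a drift term ${\bf b}^{x_0}\cdot D\eta_\ee^{x_0}$; only after this transformation can ABP be applied to get Lemma~\ref{lem:detlbd}. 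Your proposal omits this step entirely, and without it the chain of estimates does not close. A secondary, smaller discrepancy: the paper's final bootstrap goes through global $C^{2,\alpha}$ estimates for the Monge--Amp\`ere equation \cite{S2,TWboundaryreg} (after the twisted Harnack and boundary H\"older estimates yield $w_\ee\in C^{\alpha_1}(\overline\Omega)$), rather than Caffarelli's $W^{2,p}$ theory, though the latter could likely be substituted.
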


Fix $s\in(n,\infty)$.
Throughout the section, $u_\varepsilon$ will denote
a uniformly convex $W^{4,s}(\Omega)$ solution to (\ref{eqn:eleq}),
and we will use numbered constants $C_n$
to denote positive constants that do not depend on the solution $u_\varepsilon$
but only on $n$, $s$, $\Omega$, $\Omega_0$, $\varphi$, $\psi$, $f_k$, and $g_k$.
We will write $C_n$ for constants that do not depend on $\varepsilon$,
while for constants that depend on $\varepsilon$ the dependency will be explicitly stated.

We start by getting an $L^\infty$ bound for $u_\varepsilon$.

\begin{lemma}[Uniform $L^\infty$ bound on $u_\ee$]\label{prop:linftyest}
	If $0<\ee<\ee_0$
	where $\ee_0 = \ee_0(n,\Omega,\Omega_0,\varphi,\psi,f_k, g_k)$ is 
	a small number satisfying $\ee_0<1$,
	then
	\begin{equation}\label{eqn:linfbdd}
	\begin{aligned}
		||u_\ee||_{L^\infty(\Omega)} < C_{14}
		\text{.}
	\end{aligned}
	\end{equation}
\end{lemma}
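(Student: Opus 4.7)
Convexity of $u_\ee$ together with $u_\ee=\varphi$ on $\partial\Omega$ immediately gives $u_\ee \leq \max_{\partial\Omega}\varphi$ on $\overline{\Omega}$, so only the lower bound requires work. The plan is to derive a uniform energy estimate $J_\ee(u_\ee)\leq C$, extract the penalty contribution $\ee^{-1}\int_{\omout}G(u_\ee - \widetilde{\varphi}_\ee)\,dx\leq C$, and propagate this to an $L^\infty$ bound using the convexity of $u_\ee$. Convexity of $J_\ee$---from (F1), convexity of $G$, and concavity of $\log\det$ on positive-definite matrices---together with the PDE (\ref{eqn:eleq}) is what underpins the energy comparison.

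\textbf{Energy inequality.} Following \cite{Twisted_Harnack}, I would construct an explicit competitor $v_\ee$ by modifying $\widetilde{\varphi}_\ee$ in a thin boundary strip of $\Omega$ so that $v_\ee=\varphi$ and $\det D^2 v_\ee = 1/\psi$ on $\partial\Omega$, while keeping $v_\ee=\widetilde{\varphi}_\ee$ near $\overline{\Omega_0}$ and $v_\ee$ uniformly convex with $D^2 v_\ee \geq c\,\ee^{1/(3n^2)} I$ on $\Omega$. The perturbation $\ee^{1/(3n^2)}(e^\rho-1)$ in $\widetilde{\varphi}_\ee$ is tuned precisely so that $\det D^2 v_\ee \geq c\,\ee^{1/(3n)}$, giving $-\ee\int_\Omega \log\det D^2 v_\ee \leq C\ee|\log\ee|$. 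The energy manipulation of \cite{Twisted_Harnack} (in its matching-boundary-data formulation) then yields $J_\ee(u_\ee)\leq C$ uniformly. Lower-bounding the Lagrangian term via convexity of $F$ and the already-established upper bound on $u_\ee$, and the $\log\det$ term via the Monge--Amp\`ere identity $\int_\Omega \det D^2 u_\ee = \int_\Omega 1/w_\ee$ together with $w_\ee=\psi$ on $\partial\Omega$, then isolates
\[
	\frac{1}{\ee}\int_{\omout} G(u_\ee - \widetilde{\varphi}_\ee)\,dx \leq C.
\]

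\textbf{Propagation and main obstacle.} Since $H(t)\geq t$ gives $G(t)\geq t^4/2$, we obtain $\|u_\ee - \widetilde{\varphi}_\ee\|_{L^4(\omout)}^4 \leq C\ee$ and hence $\|u_\ee\|_{L^4(\omout)}\leq C$ (using boundedness of $\widetilde{\varphi}_\ee$). If $\min_{\overline{\Omega}}u_\ee = -M$ is achieved at some $x_0\in\Omega$, convexity of $u_\ee$ combined with its boundary value $\varphi$ forces $u_\ee\leq -cM+C$ on a subset of $\omout$ whose measure is bounded below by a geometric constant; hence $cM^4 \leq C$ and thus $M\leq C_{14}$. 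The main obstacles are (i) the construction of $v_\ee$ with matching Monge--Amp\`ere boundary data and the verification $\det D^2 v_\ee \gtrsim \ee^{1/(3n)}$, where the tuned perturbation $\ee^{1/(3n^2)}(e^\rho-1)$ enters decisively, and (ii) the careful bookkeeping of boundary terms in the energy manipulation, since the Dirichlet datum on $w_\ee$ is not a natural boundary condition for $J_\ee$. The modified penalty $G$ enters the present $L^\infty$ argument only through $G(0)=0$ and the super-quartic lower bound $G(t)\gtrsim t^4$---this is exactly what makes the quadratic-growth hypothesis on $F$ from \cite{LeCPAM,LePRS,Twisted_Harnack} unnecessary here.
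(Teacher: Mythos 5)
Your approach diverges substantially from the paper's, and there are genuine gaps. The paper does not attempt to prove a uniform energy bound $J_\ee(u_\ee)\leq C$. It instead splits into two cases: when $u_\ee(x_0)>\widetilde{\varphi}_\ee(x_0)-1$ for some $x_0\in\Omega_0$, the lower bound follows from convexity alone; otherwise one has $u_\ee\leq\widetilde{\varphi}_\ee$ in $\Omega_0$, which is used as a sign condition to discard the $F_{p_ip_j}D_{ij}u_\ee$ contribution. In that second case the paper invokes the inequality (\ref{ineq1}) (obtained from testing the PDE with $u_\ee-\widetilde{\varphi}_\ee$, cited from \cite[(3.6)]{LePRS}) rather than comparing $J_\ee(u_\ee)$ with $J_\ee(v_\ee)$, estimates $\int_\Omega -f_\ee(u_\ee-\widetilde{\varphi}_\ee)$ term by term, and then applies Jensen's inequality to the convex function $H$ together with the $L^1$--$L^\infty$ estimate (\ref{cor22}) and the identity $G'(t)t=2t^2H(t^2)$ to absorb the growth of the $F$-derivative terms into the penalty, as in (\ref{est2-2})--(\ref{eqn:Hsest}).

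Your energy-comparison route has two concrete problems. First, $u_\ee$ is only a critical point of $J_\ee$ subject to the second boundary condition $w_\ee=\psi$, which is not a natural boundary condition; comparing $J_\ee(u_\ee)$ with $J_\ee(v_\ee)$ produces boundary terms $\ee\int_{\partial\Omega}\psi\,U_\ee^{\nu\nu}\partial_\nu(u_\ee-\varphi)\,dS$ and $\int_{\partial\Omega_0}(v_\ee-u_\ee)F_\pp(x,u_\ee,Du_\ee)\cdot\nu_0\,dS$, exactly as in (\ref{eqn:jepsilonest1}). The $\partial\Omega_0$ term grows like $\|u_\ee\|_\infty f_0(\|u_\ee\|_\infty)g_0(C(1+\|u_\ee\|_\infty))$ by (F2) and (\ref{eqn:gradestsubset}); controlling it needs the very $L^\infty$ bound you are trying to prove, and the $\partial\Omega$ term needs the gradient control (\ref{keyest}) which is itself derived inside the paper's proof. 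Second, and more fundamentally, your closing sentence---that $G$ enters only through $G(0)=0$ and the super-quartic lower bound $G(t)\gtrsim t^4$---misidentifies the mechanism that removes the quadratic-growth hypothesis on $F$. A pure $t^4$ penalty would not absorb the terms involving $F_z$, $F_{p_ix_i}$, $F_{p_iz}$, whose contributions are of size $H(S)$ with $S\sim\|u_\ee\|_\infty$ and $H$ as in (\ref{eqn:hdef}); for general $f_k,g_k$ this can dwarf any fixed polynomial. The point of (\ref{eqn:hdef})--(\ref{eqn:Hdef}) is that $G$ is built \emph{from} $H$ so that $G'(t)t=2t^2H(t^2)$ dominates $H$, and the convexity of $H$ is what lets Jensen's inequality compare $H(S)$ against $\frac{1}{\ee}\int_{\omout}G'(u_\ee-\widetilde{\varphi}_\ee)(u_\ee-\widetilde{\varphi}_\ee)\,dx$. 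The quartic lower bound (from $H(t)\geq t$) is only the final, easy consequence used in (\ref{ineq2fin}), not the reason the scheme handles general Lagrangians.
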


	\begin{proof} 
		Consider $\ee<1$.
		First, as $u_\ee$ is convex, we have an upper bound:
		\begin{align*} 
			u_\ee\leq
			\sup_{\partial\Omega} u_\ee = \sup_{\partial\Omega}\varphi =: C_0
			\quad\text{in }\Omega\text{.}
		\end{align*}
		For the lower bound, we consider two cases as in Le-Zhou \cite[pp.27--28]{LZ}.
		
		\emph{Case 1}. $u_\ee(x_0) > \widetilde{\varphi}_\ee(x_0) - 1$ for some $x_0\in \Omega_0$.
		We have 
		\begin{align*}
			u_\ee(x_0) > \inf_{\Omega} \widetilde{\varphi}_\ee-1 
			&\geq -\sup_{\Omega}|\widetilde{\varphi}_{\ee}|- 1 \\
			&\geq -(\sup_{\Omega}|\varphi| 
			+\sup_{\Omega} |e^\rho -1| + 1) =: -C_1
			\text{.}
		\end{align*}
		Let $x\in\Omega\setminus\{x_0\}$ be arbitrary
		and set  $y$ to be the intersection of the ray 
		$\overrightarrow{xx_0}$ and $\partial\Omega$.
		By the convexity of $u_\ee$, we have
		\begin{equation}\label{eqn:a}
		\begin{aligned}
			-C_1 \leq u_\ee(x_0)
			\leq \frac{|x_0-y|}{|x-y|} u_\ee (x) 
			+ \left(1-\frac{|x_0-y|}{|x-y|}\right) u_\ee(y)
			\text{.}
		\end{aligned}
		\end{equation}
		Because $x_0\in \Omega_0$ and $y\in\partial\Omega$, 
		we have
		\begin{align}\label{eqn:b}
			\frac{|x_0-y|}{|x-y|} \geq
			\frac{\dist(\Omega_0, \partial\Omega)}{\mathrm{diam}(\Omega)}>0
			\text{,}\quad\text{and }
			|u_\ee (y)|\leq \sup_{\partial\Omega} |\varphi|
			\text{.}
		\end{align}
		Combining (\ref{eqn:a}) and (\ref{eqn:b}) yields 
		a lower bound for $u_\ee$ in $\Omega$.

		\emph{Case 2}. $u_\ee \leq \widetilde{\varphi}_\ee - 1$ in $\Omega_0$.
		We will use the following inequality \cite[(3.6)]{LePRS}:
		\begin{align}\label{ineq1} 
			\int_{\partial\Omega} \ee ((u_\ee )_\nu^+ )^n \, dS 
			\leq C_2 + \int_\Omega -f_\ee (u_\ee - \widetilde{\varphi}_\ee) \, dx
			\text{.}
		\end{align}
		Here $\nu$ is the outer unit normal vector to $\partial\Omega$.
		Substituting $f_\ee$ from (\ref{eqn:eleq}) 
		and expanding the divergence term
		\begin{align}\label{fe_nondiv}
			\frac{\partial}{\partial x_i} \left( \frac{\partial F}{\partial p_i} 
				(x,u_\ee, Du_\ee) \right)
			= F_{p_i x_i} + F_{p_i z} D_i u_\ee + F_{p_i p_j} D_{ij} u_\ee
			\text{,}
		\end{align}
		we find that the integral in the right-hand side of (\ref{ineq1}) becomes
		\begin{equation}\label{eqn0}
		\begin{aligned}
			\int_\Omega -f_\ee (u_\ee - \widetilde{\varphi}_\ee ) \, dx
			&= -\frac{1}{\ee} \int_{\Omega\setminus\Omega_0} 
			G' (u_\ee - \widetilde{\varphi}_\ee) (u_\ee - \widetilde{\varphi}_\ee)\, dx \\
			&+\int_{\Omega_0} -F_z (u_\ee - \widetilde{\varphi}_\ee) \, dx 
			+\int_{\Omega_0} F_{p_i p_j} D_{ij} u_\ee (u_\ee - \widetilde{\varphi}_\ee) \, dx \\
			&+\int_{\Omega_0} F_{p_i x_i} (u_\ee - \widetilde{\varphi}_\ee) \, dx 
			+\int_{\Omega_0} F_{p_i z} D_i u_\ee (u_\ee-\widetilde{\varphi}_\ee) \, dx
			\text{.}
		\end{aligned}
		\end{equation}
		We estimate the terms in the right-hand side of (\ref{eqn0}) separately.

		First, as $(F_{p_i p_j})_{1\leq i,j\leq n}$ and $D^2 u_\ee$ are nonnegative-definite, 
		$F_{p_i p_j} D_{ij}u_\ee \geq 0$. 
		Since $u_\ee \leq \widetilde{\varphi}_\ee$,
		we get
		\begin{align}\label{est1}
			\int_{\Omega_0} F_{p_i p_j} D_{ij} u_\ee (u_\ee -\widetilde{\varphi}_\ee)\, dx \leq 0
			\text{.}
		\end{align}
		Next, we estimate $\int_{\Omega_0} -F_z(u_\ee-\widetilde{\varphi}_\ee)$.
		As $u_\ee$ is convex, we have the following gradient bound:
		\begin{align}\label{gradbound}
			|Du_\ee (x) | \leq \frac{\sup_{\partial\Omega} u_\ee - u_\ee (x)}
			{\dist (x,\partial\Omega)} \quad\text{for } x \in \Omega\text{.}
		\end{align}
		Therefore, for $x \in \Omega_0$, we have
		\begin{align}\label{eqn:gradestsubset}
			|Du_\ee (x) | \leq \frac{|\sup_{\partial\Omega}\varphi| + ||u_\ee||_{L^\infty(\Omega)}}{\dist(\Omega_0, \partial\Omega)}
			\leq C_3( 1 + ||u_\ee|| _{L^\infty(\Omega)} )
			\text{.}
		\end{align}
		Because $f_0$ and $g_0$ are increasing functions,
		(\ref{Fest}) and (\ref{eqn:gradestsubset}) give us 
		\begin{equation}\label{est2}
		\begin{aligned}
			\int_{\Omega_0} -F_z (u_\ee-\widetilde{\varphi}_\ee)
			&\leq \int_{\Omega_0} f_0\left(|u_\ee (x)|\right) 
			g_0\left(|Du_\ee(x)|\right) \left(|u_\ee(x)| + |\widetilde{\varphi}_\ee(x)|\right)\, dx \\
			&\leq |\Omega_0| f_0\left(||u_\ee||_{L^\infty(\Omega_0)}\right) 
			g_0\left(||Du_\ee||_{L^\infty(\Omega_0)}\right) 
			(||u_\ee||_{L^\infty(\Omega_0)} + ||\widetilde{\varphi}_\ee||_{L^\infty(\Omega_0)}) \\
			&\leq |\Omega_0| f_0\left(||u_\ee||_{L^\infty(\Omega_0)}\right) g_0(S) 
			(||u_\ee||_{L^\infty(\Omega_0)} + ||\widetilde{\varphi}_\ee||_{L^\infty(\Omega_0)}) \\
			&\leq Sf_0(S) g_0(S) 
			\text{,}
		\end{aligned}
		\end{equation}
		where 
		\begin{equation}\label{eqn:sdef}
		\begin{aligned}
			S = C_4(||u_\ee||_{L^\infty(\Omega_0)} + 1)
		\end{aligned}
		\end{equation}
		for some large $C_4>0$. 
		Other terms in the right-hand side of (\ref{eqn0})
		can be estimated similarly using (\ref{Fest}) and (\ref{eqn:gradestsubset}):
		\begin{equation}\label{est3}
		\begin{aligned}
			&\int_{\Omega_0} F_{p_i x_i} (u_\ee-\widetilde{\varphi}_\ee)\, dx \\
			&\leq n|\Omega_0| f_2\left(||u_\ee||_{L^\infty(\Omega_0)}\right) 
			g_2\left(||Du_\ee||_{L^\infty(\Omega_0)}\right) 
			(||u_\ee||_{L^\infty(\Omega_0)}+||\widetilde{\varphi}_\ee||_{L^\infty(\Omega_0)}) \\
			&\leq S f_2(S)g_2(S)
			\text{,}
		\end{aligned}
		\end{equation}
		and
		\begin{equation}\label{est4}
		\begin{aligned}
			&\int_{\Omega_0} F_{p_i z} D_i u_\ee (u_\ee-\widetilde{\varphi}_\ee)\, dx \\
			&\leq n|\Omega_0| f_3\left(||u_\ee||_{L^\infty(\Omega_0)}\right)
			g_3\left(||Du_\ee||_{L^\infty(\Omega_0)}\right) 
			\norm{Du_\ee}_{L^\infty(\Omega_0)}
			(||u_\ee||_{L^\infty(\Omega_0)}+||\widetilde{\varphi}_\ee||_{L^\infty(\Omega_0)}) \\
			&\leq S^2 f_3(S)g_3(S)
			\text{.}
		\end{aligned}
		\end{equation}
		Combining (\ref{eqn0}), (\ref{est1}), (\ref{est2}), (\ref{est3}), 
		(\ref{est4})
		and (\ref{eqn:hdef}),
		we obtain
		\begin{equation}\label{est2-1}
		\begin{aligned}
			&\int_{\Omega} -f_\ee (u_\ee- \widetilde{\varphi}_\ee)\, dx \\ 
			&\leq -\frac{1}{\ee} \int_{\Omega\setminus\Omega_0} 
			G'(u_\ee-\widetilde{\varphi}_\ee)(u_\ee-\widetilde{\varphi}_\ee) \, dx
			+ S(f_0(S)g_0(S) + f_2(S)g_2(S) + Sf_3(S)g_3(S)) \\
			&\leq -\frac{1}{\ee} \int_{\Omega\setminus\Omega_0} 
			G'(u_\ee-\widetilde{\varphi}_\ee)(u_\ee-\widetilde{\varphi}_\ee) \, dx + H(S) \text{.}
		\end{aligned}
		\end{equation}

		We now estimate $H(S)$ using the following inequality 
		\cite[Corollary 2.2]{LePRS}:
		\begin{equation}\label{cor22}
		\begin{aligned}
			||u_\ee||_{L^\infty(\Omega)} \leq
			C_5 + C_6 \int_{\Omega\setminus\Omega_0} |u_\ee| \, dx 
			\text{.}
		\end{aligned}
		\end{equation}
		From (\ref{eqn:sdef}) and (\ref{cor22}),
		we have
		\begin{equation}\label{est2-1-1}
		\begin{aligned}
			S \leq C_4(1 + ||u_\ee||_{L^\infty(\Omega)} )
			\leq C_7 \int_{\Omega\setminus\Omega_0} (1 + |u_\ee|) \, dx 
			\leq \frac{1}{|\Omega\setminus\Omega_0|}
			\int_{\Omega\setminus\Omega_0} C_8(1 + |u_\ee|) \, dx 
			\text{.}
		\end{aligned}
		\end{equation}
		Because $H$ is convex, 
		combining (\ref{est2-1-1}) with Jensen's inequality gives us 
		\begin{equation}\label{est2-2}
		\begin{aligned}
		    H(S)\leq H\left(\frac{1}{|\Omega\setminus\Omega_0|}
			\int_{\Omega\setminus\Omega_0} C_8(1 + |u_\ee|) \, dx \right) \\
			\leq \frac{1}{|\Omega\setminus\Omega_0|}
			\int_{\Omega\setminus\Omega_0} H(C_8 (1 + |u_\ee|)) \, dx
			\text{.}
		\end{aligned}
		\end{equation}
		Note that (\ref{eqn:Hdef}) implies 
		\begin{equation}\label{eqn:GHest}
		\begin{aligned}
			G'(u_\ee-\widetilde{\varphi}_\ee) (u_\ee-\widetilde{\varphi}_\ee)
			=2(u_\ee-\widetilde{\varphi}_\ee)^2 H((u_\ee-\widetilde{\varphi}_\ee)^2)
			\text{.}
		\end{aligned}
		\end{equation}
		As $|\widetilde{\varphi}_{\ee}|\leq C_1$,
		we have
		\begin{equation}\label{eqn:uesquare}
		\begin{aligned}
			(u_\ee-\widetilde{\varphi}_\ee)^2 \geq C_8 (1+|u_\ee|)
		\end{aligned}
		\end{equation}
		when $|u_\ee| >C_9\geq C_1 + 1$. 
		We consider the following cases:
		\begin{enumerate}[label=(\roman*)] 
			\item If $|u_\ee| > C_9$. 
				Because $H$ is increasing, 
				from (\ref{eqn:GHest}) and (\ref{eqn:uesquare}) we have 
				\begin{equation}\label{eqn:estcase1}
				\begin{aligned}
					G'(u_\ee-\widetilde{\varphi}_\ee) (u_\ee-\widetilde{\varphi}_\ee)
					\geq H((u_\ee-\widetilde{\varphi}_\ee)^2)
					\geq H(C_8(1+|u_\ee|))
					\text{.}
				\end{aligned}
				\end{equation}
			\item If $|u_\ee| \leq C_9$. 
				We have 
				\begin{equation}\label{eqn:estcase2}
				\begin{aligned}
					H(C_8(1+|u_\ee|))\leq H(C_8(1+C_9))=:C_{10}
					\text{.}
				\end{aligned}
				\end{equation}
		\end{enumerate}
		Combining (\ref{eqn:estcase1}) and (\ref{eqn:estcase2}) gives 
		\begin{equation}\label{eqn:Hcombest}
		\begin{aligned}
			H(C_8(1+|u_\ee|))
			\leq C_{10} + G'(u_\ee-\widetilde{\varphi}_\ee)(u_\ee-\widetilde{\varphi}_\ee)
			\quad\text{in }\Omega 
			\text{.}
		\end{aligned}
		\end{equation}
		Therefore, from (\ref{est2-2}) we now have, for $\ee<\ee_0$,
		where $\ee_0 = \ee_0(n,\Omega,\Omega_0,\varphi,\psi,f_k, g_k)$ 
		is small,
		\begin{equation}\label{eqn:Hsest}
		\begin{aligned}
			H(S)
			&\leq \frac{1}{|\Omega\setminus\Omega_0|}
			\int_{\Omega\setminus\Omega_0} 
			C_{10}+G'(u_\ee-\widetilde{\varphi}_\ee)(u_\ee-\widetilde{\varphi}_\ee)\, dx\\
			&\leq C_{10} + \frac{1}{2\ee}
			\int_{\Omega\setminus\Omega_0} 
			G'(u_\ee-\widetilde{\varphi}_\ee)(u_\ee-\widetilde{\varphi}_\ee)\, dx
			\text{.}
		\end{aligned}
		\end{equation}

		Now, by combining (\ref{ineq1}), (\ref{est2-1}), and (\ref{eqn:Hsest}),
		we get
		\begin{align}\label{keyest}
			\int_{\partial\Omega} \ee ((u_\ee)_\nu^+)^n \, dS
			+ \frac{1}{2\ee} \int_{\Omega\setminus\Omega_0}
			G'(u_\ee-\widetilde{\varphi}_\ee)(u_\ee-\widetilde{\varphi}_\ee) \, dx
			\leq C_2 + C_{10}
			\text{.}
		\end{align}
		We are now ready to prove the $L^\infty$ bound for $u_\ee$.
		As $G'(x) = 2xH(x^2)$ and $H(x^2) \geq x^2$,
		(\ref{keyest}) implies 
		\begin{equation}\label{ineq2fin}
		\begin{aligned} 
			C_{11}:=
			(C_{10}+C_2)\ee_0 
			&\geq \frac{1}{2} \int_{\Omega\setminus\Omega_0} G'(u_\ee - \widetilde{\varphi}_\ee)(u_\ee -\widetilde{\varphi}_\ee) \, dx \\
			&\geq \int_{\Omega\setminus\Omega_0} (u_\ee - \widetilde{\varphi}_\ee)^4 \, dx
			\text{.}
		\end{aligned}
		\end{equation}
		From (\ref{cor22}), we have
		\begin{equation*} 
		\begin{aligned} 
			||u_\ee||_{L^\infty (\Omega)} 
			&\leq C_5 + C_6 \int_{\Omega\setminus\Omega_0} |u_\ee| \, dx \\ 
			&\leq C_5+C_6|\Omega\setminus\Omega_0|\sup_{\Omega}|\widetilde{\varphi}_\ee| 
			+ C_6 \int_{\Omega\setminus\Omega_0} |u_\ee-\widetilde{\varphi}_\ee| \, dx \\ 
			&\leq C_{12}+C_{13}
			\left(\int_{\Omega\setminus\Omega_0} (u_\ee-\widetilde{\varphi}_\ee)^4 \, dx \right)^{1/4} \\
			&\leq C_{14}
			\quad\text{by  (\ref{ineq2fin}).}
		\end{aligned}
		\end{equation*}
		The proof of the lemma is complete.
	\end{proof}

	\begin{remark}\label{rmk:G}
		In Le \cite{LeCPAM,LePRS,Twisted_Harnack},
		the Lagrangian $F$ was assumed to 
		have a quadratic growth in the $\pp$ variable. 
		Especially, 
		$|F_{p_i x_i}|$ was assumed to grow linearly in $\pp$.
		Therefore, the integral in the left-hand side of (\ref{est3})
		could be bounded by a quadratic term.
		In (\ref{est3}), 
		this term cannot be bounded by a quadratic term 
		because we are assuming a general growth assumption (\ref{Fest}) for $F$. 
		This is why we had to replace the quadratic term in  (\ref{eqn:approx})
		using $G$ defined by (\ref{eqn:hdef})--(\ref{eqn:Hdef}) in (\ref{Jepsilon}).
	\end{remark}
	
	Combining the $L^\infty$ bound (\ref{eqn:linfbdd}) with 
	the gradient bound (\ref{eqn:gradestsubset}),
	we obtain the following corollary.

	\begin{corollary}
		If $x\in\Omega_0$ and $0<\ee<\ee_0$
		where $\ee_0 = \ee_0(n,\Omega,\Omega_0,\varphi,\psi,f_k, g_k)$ is small,
		then we have 
		\begin{equation}\label{eqn:gradbdcor}
		\begin{aligned}
			|Du_\ee(x)|
			\leq \frac{\sup_{\partial\Omega}|\varphi| +C_{14}}{
				\dist(\Omega_0,\partial\Omega)}
			=:C_{15}
			\text{.}
		\end{aligned}
		\end{equation}
	\end{corollary}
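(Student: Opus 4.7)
The plan is to combine directly the convex gradient estimate already recorded in (\ref{gradbound}) with the newly established $L^\infty$ bound from Lemma \ref{prop:linftyest}. No new machinery is needed; the corollary is essentially a bookkeeping step promoting the \emph{a priori} gradient estimate (\ref{eqn:gradestsubset}), which was stated in terms of $\|u_\ee\|_{L^\infty(\Omega)}$, into one with an explicit constant.

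Concretely, I would proceed as follows. Start from the convexity-based pointwise bound
\[
|Du_\ee(x)| \leq \frac{\sup_{\partial\Omega} u_\ee - u_\ee(x)}{\dist(x,\partial\Omega)},
\qquad x\in\Omega,
\]
which is inequality (\ref{gradbound}) and is valid for any convex function on a convex domain. For $x\in\Omega_0$, replace $\dist(x,\partial\Omega)$ in the denominator by the smaller quantity $\dist(\Omega_0,\partial\Omega)$, which is strictly positive since $\Omega_0\Subset\Omega$. In the numerator, use the boundary condition $u_\ee=\varphi$ on $\partial\Omega$ to bound $\sup_{\partial\Omega}u_\ee \leq \sup_{\partial\Omega}|\varphi|$, and bound $-u_\ee(x) \leq |u_\ee(x)| \leq \|u_\ee\|_{L^\infty(\Omega)}$.

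Finally, insert the $L^\infty$ estimate $\|u_\ee\|_{L^\infty(\Omega)} < C_{14}$ supplied by Lemma \ref{prop:linftyest} (valid for $0<\ee<\ee_0$ with $\ee_0$ as in the lemma) to obtain
\[
|Du_\ee(x)| \leq \frac{\sup_{\partial\Omega}|\varphi| + C_{14}}{\dist(\Omega_0,\partial\Omega)} =: C_{15},
\]
which is exactly the stated inequality (\ref{eqn:gradbdcor}). There is no real obstacle in this argument; the only point to mention is that the hypothesis $\ee<\ee_0$ is inherited from Lemma \ref{prop:linftyest}, and the constant $C_{15}$ depends only on the quantities already tracked ($n$, $\Omega$, $\Omega_0$, $\varphi$, $\psi$, $f_k$, $g_k$) through $C_{14}$ and the geometric data of $\Omega_0$ and $\Omega$.
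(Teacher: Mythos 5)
Your argument is correct and is precisely the one the paper intends: the corollary is obtained by combining the convexity gradient bound (\ref{gradbound}) (equivalently its specialization (\ref{eqn:gradestsubset})) with the $L^\infty$ estimate (\ref{eqn:linfbdd}) from Lemma \ref{prop:linftyest}. Nothing further is needed.
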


	From now on, we fix $\ee<\ee_0$.
	Before we move on to the next step of the proof,
	we revisit the proof of (\ref{keyest}) and note that 
	the left-hand side can be bounded by a constant 
	independent of $\ee$, 
	without having to assume that
	$u_\ee\leq\widetilde{\varphi}_\ee-1$ in $\Omega_0$. 

	\begin{remark} 
		In the proof of (\ref{keyest}),
		the inequality $u_\ee\leq\widetilde{\varphi}_\ee-1$ in $\Omega_0$ 
		was used to show (\ref{est1}).
		Having established the bounds (\ref{eqn:linfbdd}) and (\ref{eqn:gradestsubset}),
		we can obtain an estimate for the left-hand side
		of (\ref{est1}) without the assumption.

		From (\ref{Fest}), (\ref{eqn:linfbdd}) and (\ref{eqn:gradestsubset}),
		we have 
		\begin{align*} 
			0\leq (F_{p_i p_j})_{1\leq i,j\leq n}\leq f_1(C_{14})g_1(C_{15})I_n
			\text{,}
		\end{align*}
		and therefore 
		\begin{align*} 
			0\leq F_{p_i p_j} D_{ij}u_\ee \leq f_1(C_{14})g_1(C_{15})\Delta u_\ee 
			\text{.}
		\end{align*}
		By the divergence theorem, (\ref{eqn:linfbdd}) and (\ref{eqn:gradestsubset}),
		(\ref{est1}) can be replaced by 
		\begin{equation*}
		\begin{aligned}
			\int_{\Omega_0} F_{p_i p_j} D_{ij}u_\ee (u_\ee-\widetilde{\varphi}_\ee)\, dx
			&\leq \left(\norm{u_\ee}_{L^\infty(\Omega_0)}
			+\norm{\widetilde{\varphi}_\ee}_{L^\infty(\Omega_0)}\right)
			f_1(C_{14})g_1(C_{15})\int_{\Omega_0} \Delta u_\ee\, dx \\ 
			&\leq C_{16}\int_{\Omega_0}\Delta u_\ee\, dx 
			=C_{16}\int_{\partial\Omega_0} (Du_\ee\cdot{\nu_0})\, dS
			\leq C_{17}
			\text{,}
		\end{aligned}
		\end{equation*}
		where $\nu_0$ is the outer unit normal vector to $\partial\Omega_0$.
		Therefore, 
		for $u_\ee$ not necessarily satisfying $u_\ee\leq\widetilde{\varphi}_\ee-1$ in $\Omega_0$,
		we have instead of (\ref{keyest}), 
		\begin{equation}\label{keyest2}
		\begin{aligned}
			\int_{\partial\Omega} \ee ((u_\ee)_\nu^+)^n \, dS
			+ \frac{1}{2\ee} \int_{\Omega\setminus\Omega_0}
			G'(u_\ee-\widetilde{\varphi}_\ee)(u_\ee-\widetilde{\varphi}_\ee) \, dx
			\leq C_{18}
			\text{.}
		\end{aligned}
		\end{equation}
		Here $C_{18}$ is a constant possibly larger than $C_2+C_{10}$.
	\end{remark}

	Next,
	we prove the following estimates for $f_\ee$ in $\Omega_0$.
	\begin{lemma}[Estimates for $f_\ee$ in $\Omega_0$]\label{prop:fe}
		There are positive constants $\widetilde{C}$ and $D_*$
		that depend on $n, \Omega_0, \Omega, \varphi, \psi, f_k$ and $g_k$
		such that
		\begin{align} 
			-\widetilde{C}-D_*\Delta u_\ee \leq f_\ee \leq \widetilde{C} 
			\quad\text{in }\Omega_0
			\text{.}
		\end{align}
	\end{lemma}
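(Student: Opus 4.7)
The plan is to expand $f_\ee$ in non-divergence form in $\Omega_0$ using (\ref{fe_nondiv}) and then to estimate each resulting term separately using the bounds already at our disposal. Explicitly, in $\Omega_0$ we have
\begin{equation*}
f_\ee = F_z(x,u_\ee,Du_\ee) - F_{p_i x_i}(x,u_\ee,Du_\ee) - F_{p_i z}(x,u_\ee,Du_\ee) D_i u_\ee - F_{p_i p_j}(x,u_\ee,Du_\ee) D_{ij} u_\ee.
\end{equation*}
The key structural point is that the last term is the only one that depends on second derivatives of $u_\ee$, and by the convexity of $F$ in $\pp$ (hypothesis (F1)) together with the convexity of $u_\ee$, the matrix product $F_{p_ip_j} D_{ij}u_\ee$ is nonnegative. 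This is what will split the two-sided estimate: the upper bound on $f_\ee$ will need no information on $D^2 u_\ee$ at all, while the lower bound will only require controlling the trace $\Delta u_\ee$.

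For the first three terms, I would apply the growth estimates (\ref{Fest}) from (F2) together with the uniform $L^\infty$ bound $\|u_\ee\|_{L^\infty(\Omega)}\leq C_{14}$ from Lemma \ref{prop:linftyest} and the interior gradient bound $|Du_\ee|\leq C_{15}$ in $\Omega_0$ from (\ref{eqn:gradbdcor}). Since $f_0,g_0,f_2,g_2,f_3,g_3$ are increasing, each of $|F_z|$, $|F_{p_ix_i}|$, and $|F_{p_i z} D_i u_\ee|$ is bounded pointwise in $\Omega_0$ by a constant depending only on $n$, $\Omega$, $\Omega_0$, $\varphi$, $\psi$, $f_k$, $g_k$. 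Adding these gives a constant $\widetilde{C}$.

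For the last term, the bound $0\leq (F_{p_i p_j})\leq f_1(|u_\ee|)g_1(|Du_\ee|) I_n$ in (\ref{Fest}) combined with the convexity of $u_\ee$ gives
\begin{equation*}
0\leq F_{p_i p_j} D_{ij} u_\ee \leq f_1(C_{14}) g_1(C_{15}) \Delta u_\ee \quad \text{in } \Omega_0.
\end{equation*}
The upper bound $f_\ee\leq \widetilde{C}$ then follows by discarding the nonpositive $-F_{p_ip_j}D_{ij}u_\ee$ term, and the lower bound $f_\ee \geq -\widetilde{C} - D_* \Delta u_\ee$ follows by setting $D_*:=f_1(C_{14})g_1(C_{15})$. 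There is no real obstacle here: this lemma is a direct bookkeeping consequence of (F2) together with the $L^\infty$ and gradient bounds already established, and the only substantive observation is the sign of $F_{p_ip_j}D_{ij}u_\ee$, which is precisely what allows a one-sided (rather than two-sided) dependence on $\Delta u_\ee$.
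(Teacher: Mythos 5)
Your proof is correct and follows essentially the same route as the paper: expand $f_\ee$ via (\ref{fe_nondiv}), bound the zeroth- and first-order terms pointwise in $\Omega_0$ using (\ref{Fest}) together with the $L^\infty$ bound (\ref{eqn:linfbdd}) and gradient bound (\ref{eqn:gradbdcor}), and isolate the second-order term using $0\leq F_{p_ip_j}D_{ij}u_\ee\leq f_1(C_{14})g_1(C_{15})\Delta u_\ee$ to obtain the one-sided dependence on $\Delta u_\ee$. Your identification of $D_* = f_1(C_{14})g_1(C_{15})$ matches the paper's definition in (\ref{eqn:estfe_d}).
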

	\begin{proof} 
		Expanding as in (\ref{fe_nondiv}),
		from (\ref{eqn:eleq}) we get 
		\begin{equation}\label{eqn:estfe_x}
		\begin{aligned}
			f_\ee &= 
			\frac{\partial F}{\partial z}(x,u_\ee, Du_\ee)\\
			&- \left(F_{p_i x_i}(x, u_\ee, Du_\ee)
				+ F_{p_i z}(x, u_\ee, Du_\ee) D_i u_\ee
				+ F_{p_i p_j} (x, u_\ee, Du_\ee) D_{ij}u_\ee \right)
				\quad\text{in }\Omega_0
			\text{.}
		\end{aligned}
		\end{equation}
		Combining (\ref{Fest}), (\ref{eqn:linfbdd}) and (\ref{eqn:gradbdcor})
		yields
		\begin{align}\label{eqn:estfe_a} 
			\left|\frac{\partial F}{\partial z}(x,u_\ee, Du_\ee)\right|\leq
			f_0(||u_\ee||_{L^\infty(\Omega_0)}) g_0(||Du_\ee||_{L^\infty(\Omega_0)}) 
			\leq f_0(C_{14})g_0(C_{15})
			\text{,}
		\end{align}
		\begin{align}\label{eqn:estfe_b}
			|F_{p_i x_i}(x,u_\ee, Du_\ee)|\leq
			f_2(||u_\ee||_{L^\infty(\Omega_0)}) g_2(||Du_\ee||_{L^\infty(\Omega_0)}) 
			\leq f_2(C_{14})g_2(C_{15})
			\text{,}
		\end{align}
		\begin{equation}\label{eqn:estfe_c}
		\begin{aligned} 
			|F_{p_i z}(x,u_\ee, Du_\ee)D_i u_\ee|
			&\leq
			f_3(||u_\ee||_{L^\infty(\Omega_0)}) 
			g_3(||Du_\ee||_{L^\infty(\Omega_0)}) 
			||Du_\ee||_{L^\infty(\Omega_0)} \\
			&\leq f_3(C_{14})g_3(C_{15}) C_{15}
			\text{,}
		\end{aligned}
		\end{equation}
		and
		\begin{align}\label{eqn:estfe_d} 
			0 \leq (F_{p_i p_j})_{1\leq i,j\leq n}  \leq
			f_1(||u_\ee||_{L^\infty(\Omega_0)}) g_1(||Du_\ee||_{L^\infty(\Omega_0)})I_n
			\leq f_1(C_{14})g_1(C_{15}) I_n =:D_* I_n
			\text{.}
		\end{align}
		From (\ref{eqn:estfe_d}), we get 
		\begin{align}\label{eqn:estfe_d2}
			0 \leq F_{p_i p_j} D_{ij}u_\ee \leq D_* \Delta u_\ee
			\text{.}
		\end{align}
		Combining (\ref{eqn:estfe_x})--(\ref{eqn:estfe_c}) with (\ref{eqn:estfe_d2})
		gives the desired inequality.
	\end{proof}

	Having established the $L^\infty$ bound in Lemma \ref{prop:linftyest}
	and the estimates for $f_\ee$ in Lemma \ref{prop:fe},
	we can carry out the rest of the proof in Le \cite[Section 2]{Twisted_Harnack}.
	We include an outline of the proof below.

	As in \cite[Lemma 2.3]{Twisted_Harnack},
	we have an upper bound for $\det D^2 u_\ee$:
	\begin{lemma}[Upper bound for $\det D^2 u_\ee$]\label{lem:detubd}
		There is $C_{19}(\ee)>0$ such that 
		\begin{align*} 
			\det D^2 u_\ee \leq C_{19}(\ee)
			\quad\text{in }\Omega\text{.}
		\end{align*}
	\end{lemma}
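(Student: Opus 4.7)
The plan is to follow Le \cite[Section 2]{Twisted_Harnack} and prove the equivalent statement that $w_\ee = (\det D^2 u_\ee)^{-1}$ has a positive lower bound that may depend on $\ee$. The strategy is to apply the maximum principle to an auxiliary function of the form
\[
	\Phi := -\log w_\ee + M u_\ee,
\]
where $M = M(\ee) > 0$ will be chosen large. A direct computation using the linearized Monge-Amp\`ere equation in (\ref{eqn:eleq}) and the identity $U_\ee^{ij} D_{ij} u_\ee = n\det D^2 u_\ee = n/w_\ee$ gives
\[
	U_\ee^{ij} D_{ij} \Phi = -\frac{f_\ee}{\ee w_\ee} + \frac{U_\ee^{ij} D_i w_\ee D_j w_\ee}{w_\ee^2} + \frac{Mn}{w_\ee}.
\]

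Assume $\Phi$ attains its maximum at an interior point $x_0 \in \Omega$. Then $D\Phi(x_0)=0$ and, since $U_\ee \geq 0$ and $D^2\Phi(x_0)\leq 0$, also $U_\ee^{ij} D_{ij}\Phi(x_0) \leq 0$. Dropping the nonnegative gradient-squared term in the identity above yields $f_\ee(x_0) \geq Mn\ee$. The next step is to rule this out by cases. If $x_0 \in \Omega_0$, Lemma \ref{prop:fe} gives $f_\ee(x_0) \leq \widetilde{C}$, so any $M > \widetilde{C}/(n\ee)$ yields a contradiction. If $x_0 \in \Omega\setminus\Omega_0$, the expression $f_\ee = G'(u_\ee-\widetilde{\varphi}_\ee)/\ee$ together with Lemma \ref{prop:linftyest}, the bound $|\widetilde{\varphi}_\ee| \leq C_1$, the formula $G'(t) = 2tH(t^2)$, and the smoothness of $H$ bound $|f_\ee|$ by some $C(\ee)/\ee$; choosing $M$ also larger than $C(\ee)/(n\ee^2)$ forces a contradiction there as well.

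Therefore $\Phi$ attains its maximum on $\partial\Omega$, where $\Phi = -\log\psi + M\varphi \leq -\log\min_{\partial\Omega}\psi + M\sup_{\partial\Omega}|\varphi|$. Using the uniform $L^\infty$ bound $|u_\ee|\leq C_{14}$ from Lemma \ref{prop:linftyest}, we propagate to
\[
	-\log w_\ee(x) \leq -\log\min_{\partial\Omega}\psi + M\bigl(\sup_{\partial\Omega}|\varphi| + C_{14}\bigr)
	\quad\text{in }\Omega,
\]
which exponentiates to $w_\ee \geq c(\ee) > 0$, equivalently $\det D^2 u_\ee \leq 1/c(\ee) =: C_{19}(\ee)$.

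The main thing to execute carefully is the case split at the interior maximum, together with an explicit (if $\ee$-dependent) estimate on $|G'(u_\ee - \widetilde{\varphi}_\ee)|$ coming from the known $L^\infty$ bound on $u_\ee-\widetilde{\varphi}_\ee$; the rest is a routine computation. Crucially, we do not need $M$ or $C_{19}$ to be uniform in $\ee$, which is what allows this short maximum-principle argument to succeed even though the underlying estimates (\ref{eqn:linfbdd}) and (\ref{prop:fe}) depend on $\ee$ through $f_\ee$ in $\Omega\setminus\Omega_0$.
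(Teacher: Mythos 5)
Your proof is correct and reproduces the standard maximum-principle argument behind the cited Lemma~2.3 of Le's Twisted Harnack paper: the auxiliary function $\Phi = -\log w_\ee + M u_\ee$, the identity $U_\ee^{ij}D_{ij}u_\ee = n/w_\ee$, dropping the nonnegative gradient-squared term, and the case split on $\Omega_0$ versus $\Omega\setminus\Omega_0$ using Lemma~\ref{prop:fe} and the a priori $L^\infty$ bounds are all exactly as expected, and the $\ee$-dependence of $M$ and $C_{19}$ is perfectly admissible here.

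One small point to tighten: since $u_\ee\in W^{4,s}(\Omega)$ only gives $\Phi\in W^{2,s}(\Omega)$, an interior maximum point $x_0$ need not be a point of classical twice-differentiability, so one cannot literally invoke $D^2\Phi(x_0)\leq 0$. The cleaner formulation of the same computation is to note that your case analysis shows, for $M=M(\ee)$ chosen large (bigger than $\widetilde{C}/(n\ee)$ and bigger than $\sup_{\Omega\setminus\Omega_0}|f_\ee|/(n\ee)$), that $U_\ee^{ij}D_{ij}\Phi\geq 0$ holds a.e.\ in all of $\Omega$, and then to apply the Aleksandrov--Bakelman--Pucci maximum principle, which is valid for $W^{2,s}$ ($s\geq n$) subsolutions of uniformly elliptic equations ($(U_\ee^{ij})$ is uniformly elliptic since $u_\ee$ is uniformly convex). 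This yields $\sup_\Omega\Phi = \sup_{\partial\Omega}\Phi$ and the rest of your argument goes through verbatim.
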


	From the upper bound in Lemma \ref{lem:detubd} and the boundary condition 
	$u_\ee=\varphi$ on $\partial\Omega$, 
	we can construct suitable barriers to show the gradient estimate:
	\begin{equation}
	\begin{aligned}
		|Du_\ee | \leq C_{20}(\ee)
		\quad\text{in }\Omega\text{.}
	\end{aligned}
	\end{equation}

	We need the following transformation of (\ref{eqn:eleq})
	into linearized Monge-Amp\`ere equations with drifts
	(see \cite[Lemma 2.4]{Twisted_Harnack} and \cite[Lemma 2.1]{KLWZ}):
	\begin{lemma}[Transformation of (\ref{eqn:eleq})]\label{lem:lmatransform}
		Let $x_0\in\overline{\Omega}$ be fixed. 
		Define the following functions in $\overline{\Omega}$:
		\begin{equation*} 
		\begin{aligned} 
			F_\ee^{x_0}(x) &:=
			\frac{D_* |x-Du_\ee (x_0)|^2}{2\ee}\text{,}\\
			\eta_\ee^{x_0}(x) &:=
			w_\ee(x) e^{F_\ee^{x_0}(Du_\ee(x))}\text{,}\\
			{\bf b}^{x_0}(x) &:=
			-(\det D^2u_\ee(x)) \frac{D_*}{\ee}(Du_\ee(x)-Du_\ee(x_0))
			\text{.}
		\end{aligned}
		\end{equation*}
		Then, we have
		\begin{align*} 
			U_\ee^{ij} D_{ij}\eta_\ee^{x_0} + 
			{\bf b}^{x_0} \cdot D\eta_\ee^{x_0} =
			\frac{f_\ee+D_* \Delta u_\ee}{\ee}
			e^{F_\ee^{x_0}(Du_\ee(x))}
			\quad\text{in }\Omega\text{.}
		\end{align*}
	\end{lemma}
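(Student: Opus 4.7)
The plan is a direct computation using the chain rule, product rule, and two standard identities for the cofactor matrix $U_\ee^{ij}$: the defining cofactor relation $U_\ee^{ij} D_{jk} u_\ee = (\det D^2 u_\ee)\delta_{ik}$, and the Jacobi-type identity $U_\ee^{ij} D_{ijk} u_\ee = D_k(\det D^2 u_\ee)$ (which uses that $U_\ee^{ij}$ is divergence-free in $j$).

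First I would write $\eta_\ee^{x_0} = w_\ee G$ with $G(x) := e^{F_\ee^{x_0}(Du_\ee(x))}$, and introduce the shorthand $\Phi_l(x) := (D_*/\ee)(D_l u_\ee(x) - D_l u_\ee(x_0))$, so that ${\bf b}^{x_0}_l = -(\det D^2 u_\ee)\Phi_l$. By the chain rule $D_i G = G \Phi_l D_{li} u_\ee$, and a second differentiation (using $D_j \Phi_l = (D_*/\ee) D_{lj} u_\ee$) produces three terms for $D_{ij} G$. Expanding $U_\ee^{ij} D_{ij}\eta_\ee^{x_0}$ via the product rule for $\eta_\ee^{x_0} = w_\ee G$ then yields a main term $G\, U_\ee^{ij} D_{ij} w_\ee = G f_\ee/\ee$ from (\ref{eqn:eleq}), a cross term $2 U_\ee^{ij} D_i w_\ee D_j G$, and the third-order piece $w_\ee U_\ee^{ij} D_{ij} G$.

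Applying the two cofactor identities contracts indices cleanly: the cross term becomes $2G(\det D^2 u_\ee)\Phi_l D_l w_\ee$; the three pieces of $w_\ee U_\ee^{ij} D_{ij} G$ collapse respectively to $\Phi_l D_l G$, $G(D_*/\ee)\Delta u_\ee$, and $-G(\det D^2 u_\ee)\Phi_l D_l w_\ee$ (the last using $w_\ee = (\det D^2 u_\ee)^{-1}$, whose differentiation gives $D_l(\det D^2 u_\ee) = -(\det D^2 u_\ee)^2 D_l w_\ee$, canceled by one factor of $w_\ee$). Summing,
\begin{align*}
U_\ee^{ij} D_{ij}\eta_\ee^{x_0} = \frac{f_\ee + D_* \Delta u_\ee}{\ee}\, G + (\det D^2 u_\ee)\, G \Phi_l D_l w_\ee + \Phi_l D_l G.
\end{align*}

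Finally, the algebraic identity $(\det D^2 u_\ee)\Phi_l D_l \eta_\ee^{x_0} = (\det D^2 u_\ee) G \Phi_l D_l w_\ee + \Phi_l D_l G$ (immediate from $D_l \eta_\ee^{x_0} = G D_l w_\ee + w_\ee D_l G$ together with $w_\ee \det D^2 u_\ee = 1$) lets me rewrite the last two terms as $-{\bf b}^{x_0} \cdot D\eta_\ee^{x_0}$, giving the claim. The only real obstacle is index bookkeeping; the structural insight that makes the calculation work is that the exponential weight $e^{F_\ee^{x_0}(Du_\ee)}$ is engineered precisely so that its quadratic-in-$D^2 u_\ee$ contribution to $U_\ee^{ij} D_{ij} G$ produces exactly the $(D_*/\ee)\Delta u_\ee$ term needed to absorb the bad lower bound $-\widetilde{C} - D_* \Delta u_\ee$ of Lemma \ref{prop:fe}, while its third-derivative contribution combines with the cross term into a pure first-order drift.
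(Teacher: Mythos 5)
Your computation is correct and is precisely the direct verification by chain rule, product rule, and the two cofactor identities ($U_\ee^{ij}D_{jk}u_\ee = (\det D^2 u_\ee)\delta_{ik}$ and $U_\ee^{ij}D_{ijk}u_\ee = D_k \det D^2 u_\ee$) that underlies this lemma. The paper itself does not reproduce the calculation but defers to \cite[Lemma~2.4]{Twisted_Harnack} and \cite[Lemma~2.1]{KLWZ}, which argue in essentially the same way, so your proof fills in exactly the intended details.
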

	
	Using the transformation in Lemma \ref{lem:lmatransform}	
	in conjunction with the Aleksandrov-Bakelman-Pucci (ABP) maximum principle,
	we obtain a lower bound for $\det D^2 u_\ee$.

	\begin{lemma}[Lower bound for $\det D^2 u_\ee$]\label{lem:detlbd}
		There is $C_{21}(\ee) > 0$ such that 
		\begin{align*} 
			\det D^2 u_\ee \geq C_{21}^{-1}(\ee)
			\quad\text{in }\Omega 
			\text{.}
		\end{align*}
	\end{lemma}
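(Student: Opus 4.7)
The claim is equivalent to an upper bound $\sup_\Omega w_\ee \leq C(\ee)$, since $w_\ee = (\det D^2 u_\ee)^{-1} > 0$ in $\overline{\Omega}$. The plan is to apply an ABP-type maximum principle for linearized Monge--Amp\`ere equations with drifts, following Le \cite[Lemma 2.6]{Twisted_Harnack}, to the transformed equation from Lemma \ref{lem:lmatransform}, where I will choose $x_0 \in \overline{\Omega}$ to be a point at which $w_\ee$ attains its maximum. If $x_0 \in \partial\Omega$, then $\sup_\Omega w_\ee = \psi(x_0) \leq \max_{\partial\Omega}\psi$ and we are done; otherwise, I may assume $x_0 \in \Omega$.

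For this choice of $x_0$, since $F_\ee^{x_0}(Du_\ee(x_0)) = 0$, I have $\eta_\ee^{x_0}(x_0) = w_\ee(x_0) = \sup_\Omega w_\ee$. On $\partial\Omega$, the boundary condition $w_\ee = \psi$ together with the global gradient bound $|Du_\ee| \leq C_{20}(\ee)$ (established just before Lemma \ref{lem:lmatransform}) yields $\sup_{\partial\Omega} \eta_\ee^{x_0} \leq (\max_{\partial\Omega}\psi)\,e^{C(\ee)}$. Next, I would bound from below the right-hand side $h_\ee := \ee^{-1}(f_\ee + D_*\Delta u_\ee)\,e^{F_\ee^{x_0}(Du_\ee)}$ of the transformed equation. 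In $\Omega_0$, Lemma \ref{prop:fe} gives $f_\ee + D_*\Delta u_\ee \geq -\widetilde{C}$; in $\Omega\setminus\Omega_0$, the $L^\infty$ bound of Lemma \ref{prop:linftyest} makes $G'(u_\ee - \widetilde{\varphi}_\ee)$ bounded by a constant, while $\Delta u_\ee \geq 0$ by convexity of $u_\ee$. Combined with the global gradient bound on the exponential factor, this shows $h_\ee \geq -C(\ee)$ throughout $\Omega$.

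The final step is to invoke the ABP-type estimate for the equation
\[
U_\ee^{ij} D_{ij}\eta_\ee^{x_0} + {\bf b}^{x_0} \cdot D\eta_\ee^{x_0} = h_\ee \quad \text{in } \Omega.
\]
The drift satisfies $|{\bf b}^{x_0}| \leq C(\ee) \det D^2 u_\ee$ by its definition and the global gradient bound, which is precisely the structural bound under which the ABP estimate for linearized Monge--Amp\`ere equations absorbs the drift (after a change of variable $y = Du_\ee(x)$) into constants depending only on $\ee$. Combined with the upper bound $\det D^2 u_\ee \leq C_{19}(\ee)$ from Lemma \ref{lem:detubd}, this yields $\sup_\Omega \eta_\ee^{x_0} \leq \sup_{\partial\Omega} \eta_\ee^{x_0} + C(\ee)$, and therefore $\sup_\Omega w_\ee = \eta_\ee^{x_0}(x_0) \leq C(\ee)$.

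The main subtlety I expect is verifying that the ABP estimate from \cite{Twisted_Harnack} applies verbatim to our modified source term: previous proofs relied on $f_\ee$ being controlled by a quadratic expression in $\Omega \setminus \Omega_0$, whereas here $f_\ee = G'(u_\ee - \widetilde{\varphi}_\ee)/\ee$ involves the more general $G'$. However, the only property used in this lemma is a uniform-in-$x$ lower bound for $h_\ee$ (for fixed $\ee$), which still holds because $G'$ is continuous and $u_\ee - \widetilde{\varphi}_\ee$ is bounded by Lemma \ref{prop:linftyest}. With this observation, the remainder of the argument follows \cite[Lemma 2.6]{Twisted_Harnack} with essentially no change.
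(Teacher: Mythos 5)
Your proposal is correct and follows exactly the route the paper sketches: transform the linearized Monge--Amp\`ere equation for $w_\ee$ via Lemma \ref{lem:lmatransform} (with $x_0$ a maximum point of $w_\ee$), bound the source term $h_\ee$ from below using Lemma \ref{prop:fe} in $\Omega_0$ and the $L^\infty$ bound of Lemma \ref{prop:linftyest} together with $\Delta u_\ee\geq 0$ in $\Omega\setminus\Omega_0$, control the boundary values and the exponential factor by the global gradient bound $|Du_\ee|\leq C_{20}(\ee)$, and invoke the ABP-type estimate for linearized Monge--Amp\`ere operators with the drift $|{\bf b}^{x_0}|\leq C(\ee)\det D^2 u_\ee$ to conclude $\sup_\Omega w_\ee\leq C(\ee)$. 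The paper leaves these details to the reference \cite{Twisted_Harnack}; you have filled them in correctly and with the same mechanism.
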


	Combining the bounds on $\det D^2 u_\ee$ 
	in Lemmas \ref{lem:detubd} and \ref{lem:detlbd},
	the boundary condition $u_\ee=\varphi$ on $\partial\Omega$, 
	and the global $C^{1,\alpha}$ estimates for Monge-Amp\`ere equation
	in \cite[Proposition 2.6]{LS},
	we obtain global $C^{1,\alpha}$ estimates for $u_\ee$.

	\begin{lemma}[Global $C^{1,\alpha}$ estimates for $u_\ee$]
		There is $\alpha_0(\ee)\in (0,1)$ such that 
		$$
		\norm{u_\ee}_{C^{1,\alpha_0(\ee)}}\leq C_{22}(\ee)
		\text{.}
		$$
	\end{lemma}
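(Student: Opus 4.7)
The plan is to apply the global $C^{1,\alpha}$ regularity result for the Monge--Amp\`ere equation from \cite[Proposition 2.6]{LS} as a black box. The heavy lifting has already been done in the preceding lemmas; all that remains is to verify the hypotheses of that proposition for $u_\ee$ regarded as a solution to the single Monge--Amp\`ere equation $\det D^2 u_\ee = w_\ee^{-1}$ in $\Omega$ with boundary data $u_\ee = \varphi$ on $\partial\Omega$.

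First, I would combine Lemmas \ref{lem:detubd} and \ref{lem:detlbd} to observe that
\begin{equation*}
   C_{21}^{-1}(\ee)\leq \det D^2 u_\ee \leq C_{19}(\ee)\quad\text{in }\Omega,
\end{equation*}
so the right-hand side $w_\ee^{-1} = \det D^2 u_\ee$ is uniformly bounded above and below by positive constants that depend only on $\ee$ (and the fixed data $n,\Omega,\Omega_0,\varphi,\psi,f_k,g_k$). Second, I would check the geometric and boundary-data hypotheses required by \cite[Proposition 2.6]{LS}: by assumption of Theorem \ref{mainthm}, $\Omega$ is uniformly convex and smooth, while $\varphi\in C^5(\overline{\Omega})$ is convex. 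These ingredients fall directly into the hypotheses of the global $C^{1,\alpha}$ estimate for convex Aleksandrov solutions to the Monge--Amp\`ere equation with bounded measurable density bounded away from zero. Applying that proposition produces an exponent $\alpha_0(\ee)\in(0,1)$ and a constant $C_{22}(\ee)>0$ such that $\norm{u_\ee}_{C^{1,\alpha_0(\ee)}(\overline{\Omega})} \leq C_{22}(\ee)$, as claimed.

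There is no serious obstacle remaining at this point; the main work was to secure the two-sided bound on $\det D^2 u_\ee$, which required the $L^\infty$ bound in Lemma \ref{prop:linftyest}, the estimates on $f_\ee$ in Lemma \ref{prop:fe}, and the linearized Monge--Amp\`ere transformation of Lemma \ref{lem:lmatransform} combined with the ABP maximum principle. Once those are in hand, the boundary $C^{1,\alpha}$ regularity reduces to invoking the standard theory, with no further estimate on the linearized operator needed. I would only remark in passing that both $\alpha_0(\ee)$ and $C_{22}(\ee)$ may degenerate as $\ee\to 0$, since the bounds on $\det D^2 u_\ee$ do so, but this is acceptable because these $C^{1,\alpha}$ bounds are used only in the degree-theoretic argument at fixed $\ee<\ee_0$.
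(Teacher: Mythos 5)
Your proposal matches the paper's argument exactly: the paper also derives the global $C^{1,\alpha}$ estimate by combining the two-sided bounds on $\det D^2 u_\ee$ from Lemmas \ref{lem:detubd} and \ref{lem:detlbd} with the boundary condition $u_\ee=\varphi$ on $\partial\Omega$ and then invoking the global $C^{1,\alpha}$ regularity for the Monge--Amp\`ere equation in \cite[Proposition 2.6]{LS}. Your added observations about the uniform convexity of $\Omega$, the regularity of $\varphi$, and the $\ee$-dependence of the constants are correct and consistent with the paper.
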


	Using the transformation in Lemma \ref{lem:lmatransform}
	and the one-sided pointwise H\"older estimates at the boundary 
	for solutions to non-uniformly elliptic, 
	linear equations with pointwise H\"older continuous drift
	\cite[Proposition 2.7]{Twisted_Harnack},
	we obtain H\"older estimates for $w_\ee$ at the boundary. 
	The twisted Harnack inequality in \cite[Theorem 1.3]{Twisted_Harnack}
	gives H\"older estimates for $w_\ee$ in the interior.

	By combining the H\"older estimates for $w_\ee$ in the interior and the boundary,
	and using the boundary localization theorem of Savin \cite{S2},
	we obtain global H\"older estimates for $w_\ee$.

	\begin{lemma}[Global H\"older estimates for $w_\ee$]\label{gloholder}
	    There are constants $\alpha_1 (\ee)\in (0,1)$ and 
		$C_{23}(\ee) > 0$ so that 
		\begin{align*}
			||w_\ee ||_{C^{\alpha_1(\ee)} (\overline{\Omega})} \leq C_{23}(\ee)
			\text{.}
		\end{align*}
	\end{lemma}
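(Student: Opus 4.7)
The plan is to combine the interior Hölder estimate for $w_\ee$ with the one-sided boundary Hölder estimate, using Savin's boundary localization theorem \cite{S2} as the geometric bridge. This is the standard strategy for globalizing regularity for linearized Monge-Amp\`ere equations with bounded right-hand side, and the assumptions already in place (uniform convexity of $\Omega$, two-sided bounds on $\det D^2 u_\ee$, and the transformation in Lemma \ref{lem:lmatransform}) make all the ingredients available.

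First, I would derive interior Hölder regularity. Applying Lemma \ref{lem:lmatransform} at an arbitrary interior reference point $x_0$, the transformed function $\eta_\ee^{x_0}$ satisfies a linearized Monge-Amp\`ere equation with pointwise Hölder continuous drift $\mathbf{b}^{x_0}$ (since $u_\ee \in C^{1,\alpha_0(\ee)}(\overline{\Omega})$) and right-hand side $(f_\ee + D_* \Delta u_\ee) e^{F_\ee^{x_0}(Du_\ee)}/\ee$ that is non-negative and bounded in every interior subdomain by Lemma \ref{prop:fe} and the two-sided bounds $C_{21}^{-1}(\ee) \leq \det D^2 u_\ee \leq C_{19}(\ee)$. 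The twisted Harnack inequality \cite[Theorem 1.3]{Twisted_Harnack} then yields an interior Hölder modulus for $\eta_\ee^{x_0}$, and hence for $w_\ee$, with some exponent $\alpha_{\text{int}}(\ee) \in (0,1)$ on every compact $K \Subset \Omega$.

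Second, at each $x_0 \in \partial\Omega$ the one-sided pointwise Hölder estimate \cite[Proposition 2.7]{Twisted_Harnack} delivers a modulus $|w_\ee(x) - w_\ee(x_0)| \leq C(\ee)|x-x_0|^{\beta(\ee)}$ inside a boundary section of $u_\ee$ at $x_0$ of some fixed height $h_0(\ee)$. By Savin's boundary localization theorem \cite{S2}, applicable thanks to the uniform convexity of $\Omega$, the smoothness of $\varphi = u_\ee|_{\partial\Omega}$, and the two-sided bound on $\det D^2 u_\ee$, such boundary sections are comparable to ellipsoids of controlled aspect ratio and collectively cover a uniform neighborhood of $\partial\Omega$ in $\overline{\Omega}$. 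To patch the two estimates together, for $x,y \in \overline{\Omega}$ I would split into cases according to whether $\max\{\dist(x,\partial\Omega),\dist(y,\partial\Omega)\}$ is large or small compared to $|x-y|$: in the former case the interior Hölder estimate applies on a ball around the point farther from the boundary, while in the latter case both points lie in a boundary section around a common $x_0 \in \partial\Omega$, and a chaining argument along a geometric sequence of nested sections (whose existence and size are quantified by Savin's theorem) reduces the bound to the one-sided boundary modulus together with the smoothness of $\psi = w_\ee|_{\partial\Omega}$. This produces a global exponent $\alpha_1(\ee) = \min(\alpha_{\text{int}}(\ee),\beta(\ee))$ (possibly after a rescaling) and a constant $C_{23}(\ee)$.

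The principal obstacle is the degeneracy of the cofactor matrix $U_\ee^{ij}$ near $\partial\Omega$, which prevents any direct application of standard elliptic boundary regularity and forces iteration from the interior to the boundary to proceed through the geometry of sections rather than Euclidean balls. Tracking the dependence of all constants on $\ee$ through the chaining step, and reconciling the possibly different Hölder exponents coming from the interior twisted Harnack inequality and from the one-sided boundary estimate, is the technically delicate part of the argument; fortunately, at this stage $\ee$ is fixed, so we only need a qualitative estimate with constants allowed to blow up as $\ee \to 0$.
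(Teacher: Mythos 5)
Your proposal matches the paper's argument: both combine interior Hölder estimates from the twisted Harnack inequality \cite[Theorem 1.3]{Twisted_Harnack} applied via the transformation in Lemma \ref{lem:lmatransform}, one-sided boundary Hölder estimates from \cite[Proposition 2.7]{Twisted_Harnack}, and Savin's boundary localization theorem \cite{S2} to glue them into a global estimate. Your added detail on the chaining via nested sections and the case split on $\max\{\dist(x,\partial\Omega),\dist(y,\partial\Omega)\}$ versus $|x-y|$ is consistent with how this globalization is carried out in \cite[Section 2]{Twisted_Harnack}, to which the paper defers.
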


	Having established the global H\"older estimates in Lemma \ref{gloholder},
	we can prove the global $W^{4,s}(\Omega)$ estimate.

	\begin{proof}[Proof of Proposition \ref{prop:unifest}]
		From  
		\begin{align*}
			\det D^2 u_\ee = w_\ee^{-1}
			\quad\text{in }\Omega
			\text{,}\quad 
			u_\ee=\varphi 
			\quad\text{on }\partial\Omega\text{,}
		\end{align*}
		the H\"older estimates in Lemma \ref{gloholder},
		and the global $C^{2,\alpha}$ estimates for the Monge-Amp\`ere equation 
		\cite{S2, TWboundaryreg},
		we get
		\begin{align*} 
			||u_\ee||_{C^{2,\alpha_1(\ee)}(\overline{\Omega})} \leq C_{24}(\ee)
			\text{.}
		\end{align*}
		Therefore, 
		$U_\ee^{ij} D_{ij}$ is an uniformly elliptic operator 
		with H\"older continuous coefficients.
		Moreover, 
		$f_\ee$ is bounded in the $L^\infty$ norm.
		Thus, from 
		$$
		U_\ee^{ij} D_{ij} w_\ee = f_\ee / \ee
		\quad\text{in }\Omega
		\text{,}\quad
		w_\ee = \psi 
		\quad\text{on } \partial\Omega
		\text{,}
		$$ 
		we obtain estimates for $w_\ee$ in $W^{2,s}(\Omega)$.
		The $W^{4,s}(\Omega)$ estimate in (\ref{eqn:aprioriest}) follows.
	\end{proof}

	We are now ready to prove Theorem \ref{mainthm}\ref{thm:solvability}. 
	\begin{proof}[Proof of Theorem {\ref{mainthm}\ref{thm:solvability}}] 
		From the a priori estimate (\ref{eqn:aprioriest}) in Proposition \ref{prop:unifest},
		we can use Leray-Schauder degree theory as in Le \cite[pp.2275--2276]{LeCPAM}
		to prove the existence of a uniformly convex $W^{4,s}(\Omega)$ solution 
		$u_\ee$ to (\ref{eqn:eleq}) for all $s\in (n,\infty)$.
	\end{proof}

\section{Convergence of solutions to a minimizer}\label{sec:approx}

In this section,
we prove Theorem \ref{mainthm}\ref{thm:approx} on the convergence of 
solutions of (\ref{eqn:eleq}) to a minimizer of the variational problem 
(\ref{eqn:hdvar})--(\ref{eqn:hdvarcon}). 
We will follow the proof in Le \cite{LePRS,LeCPAM}.  

\begin{proof}[Proof of Theorem \ref{mainthm}\ref{thm:approx}]

By (\ref{eqn:linfbdd}),
the family $(u_\ee)_{\ee>0}$ of $W^{4,s}(\Omega)$ solutions to (\ref{eqn:eleq})
satisfies, whenever $0<\ee<\ee_0$,
\begin{equation}\label{eqn:linfty-subseq}
\begin{aligned}
	\norm{u_\ee}_{L^\infty(\Omega)}\leq C
\end{aligned}
\end{equation}
for $C$ independent of $\ee$.  
Furthermore, for any $\Omega '\Subset \Omega$,
we can combine (\ref{eqn:linfty-subseq}) with the gradient bound (\ref{gradbound})
to obtain 
\begin{equation}\label{eqn:gradbd-subseq}
\begin{aligned}
	\norm{Du_\ee}_{L^\infty(\Omega ')}
	\leq \widehat{C} (\Omega', \Omega)
	\text{.}
\end{aligned}
\end{equation}

From (\ref{eqn:linfty-subseq}) and (\ref{eqn:gradbd-subseq}), 
by passing to a subsequence $\ee_k\rightarrow 0$, 
we have 
\begin{equation}\label{subseq}
\begin{aligned}
	u_{\ee_k} \rightarrow u  
	\quad &\text{weakly in } W^{1,2}(\Omega_0)
	\text{,}\quad\text{and }\\
	u_{\ee_k} \rightarrow u  
	\quad &\text{uniformly on compact subsets of } \Omega
	\text{,}
\end{aligned}
\end{equation}
for some convex function $u$ in $\Omega$.
Combining (\ref{eqn:Hdef}) with (\ref{keyest2}) yields
\begin{equation*}
\begin{aligned}
	C_{18}\ee_k
	&\geq\frac{1}{2}
	\int_{\Omega\setminus\Omega_0} G'(u_{\ee_k}-\widetilde{\varphi}_{\ee_k})(u_{\ee_k}-\widetilde{\varphi}_{\ee_k}) \, dx
	=\int_{\Omega\setminus\Omega_0} H((u_{\ee_k}-\widetilde{\varphi}_{\ee_k})^2)(u_{\ee_k}-\widetilde{\varphi}_{\ee_k})^2 \, dx   \\
	&\geq\int_{\Omega\setminus\Omega_0} (u_{\ee_k}-\widetilde{\varphi}_{\ee_k})^4 \, dx
	\text{.}
\end{aligned}
\end{equation*}
Therefore, 
$\int_{\Omega\setminus\Omega_0} (u_{\ee_k}-\widetilde{\varphi}_{\ee_k})^4 \, dx\rightarrow 0$ 
as $k\rightarrow\infty$. 
Because $u_{\ee_k}\rightarrow u$ uniformly on compact subsets of $\Omega$ and 
\begin{align*} 
	\widetilde{\varphi}_{\ee_k}=\varphi+{\ee_k}^{\frac{1}{3n^2}} (e^\rho -1)
	\rightarrow \varphi
	\quad\text{as }k\rightarrow\infty
\end{align*}
uniformly in $\Omega$,
we have $u=\varphi$ in $\Omega\setminus\Omega_0$ and hence $u\in\convset$.
Now, we show that $u$ minimizes $J$ in (\ref{eqn:hdvar}) 
among the competitors in $\convset$ by the following steps.

\emph{Step 1.} First, we show that
\begin{align}\label{ineq3-1}
	\liminf_{k\rightarrow\infty} J(u_{\ee_k}) \geq J(u)
	\text{.}
\end{align}
From the convexity of $F$ in $z$ and $\pp$, 
we have 
\begin{equation}\label{eqn:jconvineq} 
\begin{aligned}
	J(u_{\ee_k})-J(u)
	&=\int_{\Omega_0} F(x,u_{\ee_k},Du_{\ee_k})-F(x,u,Du)\, dx \\
	&\geq\int_{\Omega_0} F_z(x,u,Du)(u_{\ee_k}-u)+F_{\pp}(x,u,Du)\cdot (Du_{\ee_k}-Du)\, dx
	\text{.}
\end{aligned}
\end{equation}
Here $F_\pp = (F_{p_1},\cdots,F_{p_n})$.
Therefore, from (\ref{subseq}) the right-hand side of (\ref{eqn:jconvineq})
converges to $0$ as $k\to\infty$, 
and the desired inequality (\ref{ineq3-1}) follows.

\emph{Step 2.} Next, we show that if $v$ is a convex function in $\overline{\Omega}$ 
satisfying $v=\varphi$ in a neighborhood of $\partial\Omega$, then 
\begin{equation}\label{eqn:jepsilonest1}
\begin{aligned}
	J_\ee (v) - J_\ee (u_\ee)
	\geq \ee\int_{\partial\Omega} \psi U_\ee^{\nu\nu} \partial_\nu
		(u_\ee-\varphi)\, dS 
	+\int_{\partial\Omega_0} (v-u_\ee)F_{\pp}(x,u_\ee,Du_\ee)\cdot\nu_0\, dS 
	\text{,}
\end{aligned}
\end{equation}
where $U_\ee^{\nu\nu}=U_\ee^{ij}\nu_i \nu_j$. 
Here $\nu$ and $\nu_0$ are outer unit normal vectors to 
$\partial\Omega$ and $\partial\Omega_0$.

We use mollification as in Le \cite[p.2277]{LeCPAM}
to obtain a sequence of uniformly convex $C^3(\overline{\Omega})$ functions 
$(v_h)_{h>0}$ that satisfy, for all $k\leq 2$,
\begin{equation}\label{eqn:vapprox}
\begin{aligned}
	D^k v_h \rightarrow D^k v
	\quad\text{as }h\rightarrow 0
	\quad\text{in a neighborhood of }\partial\Omega 
	\text{.}
\end{aligned}
\end{equation}
Recall from (\ref{Jepsilon}) that 
\begin{align*} 
	J_\ee (v) = \int_{\Omega_0} F(x,v(x), Dv(x)) \, dx
	+ \frac{1}{\ee} \int_\omout G(v - \widetilde{\varphi}_\ee) \, dx 
	- \ee \int_\Omega \log \det D^2 v(x) \, dx
	\text{.}
\end{align*}
First, 
by \cite[(5.9)]{LeCPAM} we have
\begin{equation}\label{bdryint1}
\begin{aligned}
	&-\int_\Omega\log\det D^2v_h\, dx 
	+\int_\Omega\log\det D^2u_\ee\, dx  \\
	&\geq\int_\Omega \ee^{-1}f_\ee (u_\ee-v_h) \, dx
	-\int_{\partial\Omega} D_i w_\ee U^{ij}_\ee (u_\ee-v_h)\nu_j\, dS
	+\int_{\partial\Omega} \psi U_\ee^{\nu\nu} \partial_\nu (u_\ee-v_h)\, dS
\end{aligned}
\end{equation}
Furthermore, using the convexity of $F$ and 
integrating by parts, we get
\begin{equation}\label{bdryint3}
\begin{aligned}
	&\int_{\Omega_0} F(x,v_h,Dv_h)
	- \int_{\Omega_0} F(x,u_\ee, Du_\ee) \\
	& \geq \int_{\Omega_0} F_z(x,u_\ee,Du_\ee)(v_h-u_\ee) \, dx
	+\int_{\Omega_0}F_\pp(x,u_\ee,Du_\ee) \cdot D(v_h-u_\ee) \, dx \\
	& =\int_{\Omega_0} F_z(x,u_\ee,Du_\ee)(v_h-u_\ee) \, dx
	-\int_{\Omega_0} \frac{\partial}{\partial x_i}
	\left(F_{p_i}(x,u_\ee,Du_\ee)\right)(v_h-u_\ee)\, dx \\
	& +\int_{\partial\Omega_0} (F_\pp(x,u_\ee,Du_\ee)\cdot \nu_0)(v_h-u_\ee)\, dS
	\text{.}
\end{aligned}
\end{equation}
Finally, from the convexity of $G$, we can conclude 
\begin{align}\label{bdryint2}
	\frac{1}{\ee}\int_{\Omega\setminus\Omega_0}G(v_h-\widetilde{\varphi}_\ee)\,dx-
	\frac{1}{\ee}\int_{\Omega\setminus\Omega_0}G(u_\ee-\widetilde{\varphi}_\ee)\,dx
	\geq \frac{1}{\ee} \int_{\Omega\setminus\Omega_0}
	G'(u_\ee-\widetilde{\varphi}_{\ee_k}) (v_h-u_\ee)\, dx
	\text{.}
\end{align}

Combining (\ref{bdryint1})--(\ref{bdryint2}),
we get after cancellation,
\begin{equation}\label{eqn:jepsilonest2}
\begin{aligned}
	J_\ee(v_h)-J_\ee(u_\ee)
	&\geq-\ee\int_{\partial\Omega}
	D_iw_\ee U_\ee^{ij} (u_\ee-v_h)\nu_j\, dS  \\
	&+\ee\int_{\partial\Omega}\psi U_\ee^{\nu\nu}
	\partial_\nu (u_\ee-v_h) \, dS 
	+\int_{\partial\Omega_0} 
	(v_h-u_\ee)F_\pp(x,u_\ee,Du_\ee)\cdot\nu_0 \, dS
	\text{.}
\end{aligned}
\end{equation}
By (\ref{eqn:vapprox}),
the right-hand side of (\ref{eqn:jepsilonest2})
converges to the right-hand side of (\ref{eqn:jepsilonest1})
as $h\rightarrow 0$. 
Furthermore, 
we have from \cite[(5.8)]{LeCPAM},
\begin{align*} 
	J_\ee (v_h)\rightarrow J_\ee (v)
	\quad\text{as }h\rightarrow 0 
	\text{.}
\end{align*}
Therefore, letting $h\to 0$ in (\ref{eqn:jepsilonest2})
gives (\ref{eqn:jepsilonest1}).

\emph{Step 3.}
We show that 
for any $v\in\overline{S}[\varphi,\Omega_0]$,
\begin{equation}\label{eqn:jepsilonest3}
\begin{aligned}
	J(v)\geq \liminf_{k\to\infty}J(u_{\ee_k})
	\text{.}
\end{aligned}
\end{equation}

We use (\ref{eqn:jepsilonest1})
to argue as in Le \cite[pp.372--374]{LePRS}
to obtain the following inequality:
\begin{equation}\label{eqn:jepsilonest4}
\begin{aligned}
	J(v) \geq \liminf_{k\rightarrow\infty}J(u_{\ee_k})
	-\limsup_{k\rightarrow\infty}
	\left[ {\ee_k}^{(n-1)/n} \eta_{\ee_k} + {\ee_k}^{1/n}\eta_{\ee_k}^{n-1}\right]
	\text{,}
\end{aligned}
\end{equation}
where 
\begin{align*}
	\eta_\ee := \ee^{1/n} \left(\int_{\partial\Omega} ((u_\ee^+)_\nu)^n \, dS\right)^{1/n}
	\text{.}
\end{align*}
From (\ref{keyest2}),
$\eta_\ee$ is bounded independent of $\ee$: 
$$
\eta_\ee \leq C_{18}^{1/n}
\text{,}
$$
and therefore (\ref{eqn:jepsilonest3}) follows from (\ref{eqn:jepsilonest4}).

\emph{Step 4.} We now show the minimality of $u$. 
Combining (\ref{ineq3-1}) and (\ref{eqn:jepsilonest3}) gives 
\begin{align*} 
	J(v)\geq \liminf_{k\rightarrow\infty} J(u_{\ee_k})
	\geq J(u)
\end{align*}
for all $v\in\convset$, 
which proves the minimality of $u$.
The proof of the theorem is complete. 

\end{proof}

\begin{remark}\label{rmk:end}
	By the technique in \cite{Kim1},
	Theorem \ref{mainthm} is also true for $n=1$.
	However, as mentioned in \cite[Remark 3.2]{Kim1},
	the proof of Theorem \ref{mainthm}\ref{thm:approx}
	requires an additional argument different from the 
	ones used above.
\end{remark}

\subsection*{Acknowledgments}
The author wishes to thank Professor Nam Q. Le
for suggesting the problem
and for the insightful guidance and support received during the work.
The author also expresses his gratitude to the anonymous referee
for carefully reading the note and providing constructive feedback.

The research of the author was supported in part by NSF grant DMS-2054686.

\end{document}